\theoremstyle{plain}
\newtheorem{theo}{Theorem}[section]
\newtheorem{lem}[theo]{Lemma}
\newtheorem{prop}[theo]{Proposition}
\theoremstyle{remark}
\newtheorem{rmk}[theo]{Remark}
\numberwithin{equation}{section}
\newcommand{\R}{\mathbb{R}}
\newcommand{\N}{\mathbb{N}}
\renewcommand{\leq}{\leqslant}
\renewcommand{\geq}{\geqslant}
\newcommand{\umu}{u^\mu}
\newcommand{\del}{\partial}
\newcommand{\Dt}{\dfrac{d}{dt}}
\DeclareMathSymbol{\perp}{\mathrel}{symbols}{"3F}
\DeclareMathOperator{\dist}{dist}
\DeclareMathOperator{\curl}{curl}
\DeclareMathOperator{\supp}{supp}
\DeclareMathOperator{\divv}{div}
\DeclareMathOperator{\D}{D}
\newcommand{\intO}{\int_{\Omega}}
\newcommand{\intBO}{\int_{\del\Omega}}
\def\XXint#1#2#3{{\setbox0=\hbox{$#1{#2#3}{%
\int}$ }
\vcenter{\hbox{$#2#3$ }}\kern-.6\wd0}}
\title[Degenerate lake equations]{Degenerate lake equations: classical solutions and vanishing viscosity limit}
\author[B. Al Taki and C. Lacave]{Bilal Al Taki and Christophe Lacave}
\date{\today}
\address[B. Al Taki]{Sorbonne Université, CNRS, Laboratoire Jacques-Louis Lions, 75005 Paris, France.}
\email{altaki@ljll.math.upmc.fr}
\address[C. Lacave]{Institut Fourier, UMR 5582, CNRS, Université Grenoble Alpes, 38058 Grenoble cedex 9, France.}
\email{Christophe.Lacave@univ-grenoble-alpes.fr}
\begin{document}

\maketitle

\begin{abstract}
The objective of this paper is twofold. First, we show the existence of global classical solutions to the degenerate inviscid lake equations. This result is achieved after revising the elliptic regularity for a degenerate equation on the associated stream-function, and adapting the method used for construction of classical solutions to the incompressible Euler equations. Second, we show that the weak solutions of the viscous lake equations converge to classical solutions of the inviscid lake equations when the viscosity coefficient goes to zero, which constitutes an important physical validation of these models. The later result is achieved by the use of energy method as in the proofs of Kato-type theorems. This method also allows us to expose a convergence rate.
\end{abstract}
\maketitle


\section{Introduction}

A central problem in the mathematical analysis of fluid dynamics is the asymptotic limit of the fluid flow as viscosity goes to zero. In the presence of physical boundaries with the usual no-slip condition, this problem is essentially open even for the incompressible Navier-Stokes equations due to the possible appearance of boundary layers. In this paper, we assume that a slip boundary condition of Navier-type holds. With this condition, the vanishing viscosity problem has been extensively studied when the fluid is described by the incompressible Navier-Stokes equations \cite{MikRob,Lopes2Planas,Kelliher, IftimieSueur}. We propose to study the vanishing viscosity limit of the solution to the degenerate viscous lake equations with the general Navier boundary condition. These equations model an incompressible viscous flow of a fluid in a lake when the horizontal velocity scale is large compared to the depth $b:\overline{\Omega}\rightarrow \R_+$ (shallow water) but small compared to the gravity (small Froude number: ${\rm Fr}\ll1$). This formal characterization has been mathematically justified in \cite{Br-Gi-Li-der-lake} where the authors derived the viscous lake equations from the viscous shallow water equations by letting the Froude number go to zero when the initial height converges to a non-constant function depending on the space variable. The obtained equations are the following
\begin{equation}\label{Viscous-lake}
\begin{cases}
\del_{t}(bu^{\mu})+\divv(bu^{\mu}\otimes u^{\mu})-2\mu\divv(b\D(u^{\mu})+b\divv u^{\mu}\,\mathbb{I})+b\nabla p^{\mu}=0,\\
\divv(bu^{\mu})=0, 
\end{cases}
\end{equation}
for $(t,x)\in (0,T)\times \Omega$ with $\Omega$ a bounded domain in $\R^2$. Here, $u^{\mu}=u^{\mu}(t,x)=(u^{\mu}_{1}(t,x),u^{\mu}_{2}(t,x))$ stands for the two-dimensional horizontal component of the fluid velocity, and $p^\mu=p^\mu(t,x)$ represents the pressure. These two functions are the unknown of the system. We complete System~\eqref{Viscous-lake} with the general Navier boundary condition:
\begin{equation}\label{VS-B-NV}
bu^{\mu}\cdot n=0,\;\;\;2b(\D(u^{\mu})\cdot n+\divv u^{\mu}\,\mathbb{I}\cdot n)\cdot\tau+\eta_{\mu}\, b\, u^{\mu}\cdot\tau=0,\;\;\;{\rm on}\;\; \del \Omega,
\end{equation}
where $n$ is the inward-pointing unit normal vector, $\tau=n^\perp=(-n_{2},n_{1})^T$ the unit tangent vector, and $\eta_\mu$ is a turbulent boundary drag function defined on $\del \Omega,$ that we allow it here to depend on the viscosity coefficient $\mu$, that is it, we assume $0\leq \eta_\mu \leq\eta \mu^{-\beta}$ with $\eta$ being a positive constant and $\beta<1$. These boundary conditions, introduced by {\sc H. Navier}, assume that the tangential slip velocity, rather than being zero, is proportional to the tangential stress. From a physical point of view, these conditions has been justified, for instance, in two-dimensional geophysical models, where the viscosity take into account various turbulent effect at small scale (see \cite{Ped-book}). We emphasize here, that our motivation beyond allowing $\eta_\mu$ depending on $\mu$ comes back from \cite{Wang-xi, padd} where such choice has been taken when dealing with the incompressible Navier-Stokes equations in order to understand the transition between the unstable Dirichlet case and the stable Navier case.
\medskip

The well-posedness question of System~\eqref{Viscous-lake}-\eqref{VS-B-NV} was firstly studied by {\sc D. Levermore} and {\sc M. Sammartino} \cite{Lev-Sa} in the nondegenerate case, namely when $b\geq c>0$. Existence, uniqueness, and regularity of weak solutions were then showed by adapting the classical proof used for the 2D incompressible Navier-Stokes equations. It is worth noting that the weighted Sobolev spaces introduced by the authors are actually equivalent to the standard Sobolev spaces since the non-degeneracy fact of the vertical depth $b$. In the degenerate case, this equivalence is no longer true, and consequently, precise definition of the weights under consideration must be determined. Using {\it Muckenhoupt} class of weights, the first author extended in \cite{Lake-B} the above result to the degenerate case by proving the global-in-time existence of a unique weak solution to System~\eqref{Viscous-lake}-\eqref{VS-B-NV}.

\medskip

Neglecting the viscous term in \eqref{Viscous-lake}, i.e., taking $\mu=0$, System~\eqref{Viscous-lake} reduces to the so-called inviscid lake equations which read as
\begin{equation}\label{Inviscid-Lake}
\left\{
\begin{array}{l}
\del_{t}(bu)+bu\cdot\nabla u+b\nabla q=0,\vspace*{0.2cm}\\
\divv(bu)=0 , \quad (bu)\cdot n=0.
\end{array}
\right.
\end{equation}
In the case where $b$ is constant, System~\eqref{Inviscid-Lake} becomes the well-known two dimensional Euler equations and the well-posedness is widely known due to the work of {\sc W. Wolibner} \cite{Wolibner} and {\sc V. I. Yudovich} \cite{Yud-euler}. As for the 2D-Euler equations, the notion of vorticity plays a prevalent role. Here, we introduce the potential vorticity $\omega$ as
\begin{equation*}
 \omega:=\frac{1}{b}\curl u=\frac{\partial_{1} u_{2}-\partial_{2} u_{1}}{b}
\end{equation*}
which satisfies the continuity equation together with the incompressibility condition
\begin{equation*}
 \partial_t(b\omega)+\divv(b u \omega)=0, \qquad \divv(bu)=0.
\end{equation*}
This amounts to the following vorticity formulation
\begin{equation}\label{Inviscid-vort}
\left\{
\begin{array}{l}
\partial_t\omega+u\cdot\nabla\omega =0, \vspace*{0.2cm}\\
\curl u = b\omega, \quad \divv(bu)=0, \quad (bu)\cdot n=0.
\end{array}
\right.
\end{equation}

When the depth $b$ varies but is bounded away from zero, the well-posedness (existence and uniqueness) of \eqref{Inviscid-Lake} was established by {\sc C. D. Levermore, M. Olivier} and {\sc E. S. Titi} in \cite{Lev-Titi-lake}.
 In \cite{br-me-lake}, {\sc D. Bresch} and {\sc G. M\'{e}tivier} allow the varying depth to vanish on the boundary. The essential tool in establishing the well-posedness in \cite{br-me-lake} is an elliptic regularity for a degenerate equation on the associated stream function. This estimate is highly non trivial to obtain if the depth vanishes, and the proof is related to a careful study of the associated Green function. 
More recently, {\sc C. Lacave, T. Nguyen} and {\sc B. Pausader} \cite{la-pau} extended the work in \cite{br-me-lake} treating the case of singular domains and rough bottoms. They proved that the inviscid lake equations are structurally stable under Hausdorff approximations of the fluid domain and $L^{p}$ perturbations of the depth. This study was extended for an evanescent or emergent island \cite{he-la-miot}.
\medskip

A natural question to ask is whether the solution of the viscous lake equations \eqref{Viscous-lake} converges to the solution of the inviscid lake equations \eqref{Inviscid-Lake} when the viscosity coefficient tends to zero. In this paper, we give an answer to this question. However, in order to obtain such a result, some additional regularity properties must be showed either on the weak solutions of the viscous lake equations, which according to \cite{Lake-B} belong to\footnote{See Section 2 for a definition of the space $H_b$ and $V_b$.} $L^{\infty}(0,T; H_{b})\cap L^{2}(0,T; V_{b})$, or on the solutions of the inviscid lake equations, which according to \cite{br-me-lake, la-pau} belong to $C(0, T; W^{1,p}(\Omega))$, for any $p<\infty$. Unfortunately, the study of regular solutions of the viscous lake equations is quite difficult. This is because of the degeneracy fact of the depth near the boundary, and also the fact that the vorticity equation associated to this model is singular and whence not helpful (see \cite{Lake-B}). In contrast, the inviscid model admits a nice vorticity-stream formulation \eqref{Inviscid-vort}. For this reason, we prove in the first part of this paper the existence of classical solutions of the inviscid model (in a class $C^1$). In the second part of this paper, we show the strong convergence uniformly in time in $L^2_b(\Omega)$ of the unique weak solution $u^\mu$ of \eqref{Viscous-lake} as $\mu$ goes to zero to the classical solution $u$ of \eqref{Inviscid-Lake} provided that the initial data converges in $L^2_b(\Omega)$ to a sufficiently smooth limit.

\section{Main results} 

In this section, we state the main theorems proved in this paper. First, we start by presenting our results on the inviscid model. At this stage, we emphasize that the Yudovich-type solution to the inviscid lake equations \eqref{Inviscid-Lake} is unique (\cite{br-me-lake,la-pau}), and so in this part of the paper, we are only interested in the question of existence of classical solutions. In the second part, and after recalling the notion of weak solutions of the viscous lake model \eqref{Viscous-lake}, we state the second main result of this paper which concerns the vanishing viscosity limit.

\subsection{Classical solutions of the inviscid lake equations}\label{def-invi} 

The first main ingredient of this article is to provide classical solutions to the inviscid lake equations \eqref{Inviscid-Lake}. As said in the introduction, the crucial quantity in these equations is the potential vorticity $\omega$. It is worth mentioning that, the $L^p-$norms of $b^{\frac{1}{p}}\omega$ is a conserved quantity for any $p\in [1,+\infty]$, which provides an important estimate on the solution. When $\Omega$ is simply connected, this leads to an equivalent system in terms of the vorticity \eqref{Inviscid-vort}. When the domain is not simply-connected (because of the presence of islands), we have to define the circulation and the $b$-harmonic functions. This makes the construction of the velocity in terms of the vorticity more complicated. We refer to \cite{la-pau} for such a Hodge decomposition, and we propose here to study the simplest case where the lakes do not have islands.

Let $(\Omega,b)$ satisfying the following conditions
\begin{equation}\label{b-cond}
\begin{split}
& \Omega \mbox{ a simply-connected open and bounded set where } \partial\Omega\in C^3, \\
&b(x)=c(x)\varphi^{\alpha}(x) \quad\mbox{with } \alpha \geq 0 \mbox{ and } c(x)\geq c_{0}>0 \mbox{ on } \Omega,\\
& \Omega=\{\varphi>0\}\quad\mbox{with } c,\varphi\in C^{3}(\overline{\Omega}) \mbox{ and } \nabla \varphi\neq0 \mbox{ on } \del \Omega. 
 \end{split}
\end{equation}
With these conditions, we can consider the simplest case of a non-vanishing shore ($\alpha=0$) or the more realistic case of a vanishing topography ($\alpha>0$). The divergence free condition and the vanishing of the normal component of $bu$ allow us to state that $bu=\nabla^{\perp} \psi$ where $\psi$ is the unique solution belonging to 
\[
X_{b}(\Omega):=\{ \phi\in H^1_{0}(\Omega); \ b^{-1/2}\nabla \phi\in L^2(\Omega)\}
\]
of
\begin{equation}\label{ellip-eq}
\divv\Big(\dfrac{1}{b}\nabla \psi\Big)=f \quad \text{in} \quad \Omega \qquad \psi\vert_{\del \Omega}=0,
\end{equation}
with $f=b\omega\in L^\infty(\Omega)$.

The existence and uniqueness of $\psi\in X_{b}(\Omega)$ is used in \cite{br-me-lake} to study the inviscid lake equations \eqref{Inviscid-Lake}. In that paper, the authors derived higher elliptic estimates (see \eqref{ell-reg} below) for the solution of \eqref{ellip-eq} in order to prove the existence and uniqueness of weak solutions by the {\sc Yudovich}'s argument. We go further in this direction by showing that this solution enjoys moreover the so-called log-Lipschitz estimate (see \eqref{log-lip} below), which constitutes an important ingredient of our proof of existence of classical solutions to System~\eqref{Inviscid-Lake}.
\begin{theo}\label{ellip-est}
Let $(\Omega,b)$ verifying \eqref{b-cond}. There exists $C>0$, and for any $M>0$ and $K\Subset \Omega$, there exists $C_{M,K}>0$ such that the following assertions hold.
\medskip

(i) {\bf Almost Lipschitz regularity}. For a given $f \in L^{\infty}(\Omega)$, the vector field $u=\frac1b\nabla^{\perp}\psi$ where $\psi$ is the solution of \eqref{ellip-eq} is almost Lipschitz, more precisely
\begin{equation}\label{ell-reg}
\| u\|_{L^\infty(\Omega)}+ \frac1p\| \nabla u\|_{L^p(\Omega)}\leq C \|f\|_{L^\infty(\Omega)}, \quad \forall p\in [2,\infty);
\end{equation}
and 
\begin{equation}\label{log-lip}
|u(x)-u(y)| \leq C \|f\|_{L^\infty(\Omega)} |x-y| \Big(1+\big|\ln |x-y| \big|\Big), \quad \forall x,y\in \Omega.
\end{equation}

\medskip

(ii) {\bf $C^1$ regularity}. In addition, if $f \in C^{1}(\overline{\Omega})$ such that $\|f\|_{L^\infty(\Omega)}\leq M$, the vector field $u$ belongs to $C^{1,\beta}(\overline{\Omega})$, for any $\beta<1$. Moreover, we have 
\begin{equation}\label{C1-ln}
\| \nabla u \|_{L^{\infty}(K)}\leq C_{M,K} \ln \Big(2+\|\nabla f \|_{L^\infty(\Omega)}\Big),
\end{equation}
and 
\begin{equation}\label{C1alpha}
\| u \|_{C^{1,\beta}(\overline{\Omega})}\leq C \| f \|_{C^1 (\overline{\Omega})}.
\end{equation}
\end{theo}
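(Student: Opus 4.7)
The $L^p$ part of \eqref{ell-reg} is essentially the elliptic regularity result of Bresch--M\'etivier \cite{br-me-lake}, so I take it as a starting point. What remains is (a) the log-Lipschitz bound \eqref{log-lip}, (b) the interior logarithmic estimate \eqref{C1-ln}, and (c) the global $C^{1,\beta}$ bound \eqref{C1alpha}. Items (a) and (b) reduce to classical arguments; item (c) is the delicate point where the degeneracy of $b$ on $\partial\Omega$ must be handled head on.

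\textbf{(a) Log-Lipschitz from $L^p$-growth.} This is a purely \emph{a posteriori} consequence of \eqref{ell-reg}, via the classical Yudovich optimization. Since $\partial\Omega\in C^3$, two points $x,y\in\Omega$ with $r=|x-y|$ can be joined by a rectifiable path inside $\Omega$ of length comparable to $r$; Morrey's embedding in $\R^2$ on a disc containing this path yields
\[
|u(x)-u(y)|\leq C\,\frac{p}{p-2}\,r^{1-2/p}\|\nabla u\|_{L^p(\Omega)}\leq C'\,p\,r^{1-2/p}\|f\|_{L^\infty(\Omega)},\qquad p\in(2,\infty).
\]
For $r\geq 1/2$, the $L^\infty$ part of \eqref{ell-reg} suffices; for $r<1/2$, choosing $p=2+|\ln r|$ makes $r^{-2/p}\leq e^2$ and delivers \eqref{log-lip}.

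\textbf{(b) Interior estimate with logarithmic loss.} Expanding \eqref{ellip-eq} and using $\nabla^\perp\psi=bu$ (so that $\nabla\psi=-bu^\perp$), I obtain
\[
\Delta\psi = bf-\nabla b\cdot u^\perp,\qquad \nabla u=-b^{-2}\,\nabla b\otimes\nabla^\perp\psi+b^{-1}\nabla(\nabla^\perp\psi).
\]
On a compact $K\Subset\Omega$, $b$ and $\nabla b$ are $C^2$ with $b\geq c_K>0$, so bounding $\|\nabla u\|_{L^\infty(K)}$ reduces to bounding $\|D^2\psi\|_{L^\infty(K)}$. Localizing with a cutoff $\chi\in C_c^\infty(\Omega)$ equal to $1$ on a neighborhood of $K$, the function $\chi\psi$ solves a Poisson equation in $\R^2$ whose right-hand side $g$ satisfies $\|g\|_{L^\infty}\leq C_{M,K}$ and $\|g\|_{C^{0,\gamma}}\leq C_{M,K}(1+\|f\|_{C^1})$ for some $\gamma\in(0,1)$; the H\"older control on $u^\perp$ used here comes from step (a). The classical logarithmic Schauder inequality for $\Delta$ in two dimensions,
\[
\|D^2 v\|_{L^\infty}\leq C\,\|\Delta v\|_{L^\infty}\Bigl(1+\log\bigl(1+\|\Delta v\|_{C^{0,\gamma}}/\|\Delta v\|_{L^\infty}\bigr)\Bigr),
\]
then yields \eqref{C1-ln}, since $\|f\|_{L^\infty}\leq M$ can be absorbed into $C_{M,K}$ and leaves only $\|\nabla f\|_{L^\infty}$ under the logarithm.

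\textbf{(c) Global $C^{1,\beta}$ --- the main obstacle.} Away from $\partial\Omega$, step (b) already gives $u\in C^{1,\beta}_{\mathrm{loc}}$. Near $\partial\Omega$, I would localize to a boundary chart in which $\partial\Omega$ becomes $\{y_2=0\}$ and $\varphi$ is replaced by $y_2$, so that the equation reads
\[
\divv\Bigl(\frac{1}{\tilde c(y)\,y_2^{\alpha}}\nabla\tilde\psi\Bigr)=\tilde f,\qquad \tilde\psi\vert_{y_2=0}=0,
\]
on a half-ball. I would then prove a boundary $C^{1,\beta}$ estimate for this model operator either by a refined analysis of the Green function constructed in \cite{br-me-lake}, differentiating the kernel twice and using that the Dirichlet condition forces $\tilde\psi$ to vanish at order $y_2$ so that $b^{-1}\nabla^\perp\tilde\psi$ extends regularly up to the boundary; or by invoking weighted Schauder theory associated to the Muckenhoupt $A_2$ weight $\varphi^{\alpha}$ (as used in \cite{Lake-B} on the viscous side) and upgrading weighted $W^{2,p}$ to $C^{1,\beta}$ control. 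Either way, the crux is to show that $\nabla^\perp\psi$ vanishes at exactly the order of $b$ near $\partial\Omega$ with a $C^{1,\beta}$-regular quotient; this boundary analysis is where I expect the main technical work to lie.
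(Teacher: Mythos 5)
Your steps (a) and (b) are correct, and they take a genuinely shorter route than the paper for those two estimates: the paper proves \eqref{log-lip} through a decomposition of the Green kernel, $G_{\Omega,b}(x,y)=G_{\Omega}(x,y)\sqrt{b(x)b(y)}+S_{\Omega,b}(x,y)$, in the interior (Proposition~\ref{est.local}) combined with a Bresch--M\'etivier parametrix analysis near the boundary (Theorem~\ref{theo-BM-log}), whereas your Yudovich optimization $p\simeq|\ln|x-y||$ extracts \eqref{log-lip} directly from the linear-in-$p$ growth in \eqref{ell-reg}. This works, with one repair: a disc containing a path joining two points near $\partial\Omega$ need not stay inside $\Omega$, so you should invoke a $p$-uniform extension operator (Stein) for the $C^3$ domain rather than Morrey on such a disc. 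Likewise your interior argument for \eqref{C1-ln} --- writing $\Delta\psi=bf-\nabla b\cdot u^{\perp}$, using the H\"older control on $u$ from step (a), and the logarithmic Schauder inequality for the Laplacian --- is a legitimate elementary substitute for the paper's kernel computation (Lemma~\ref{S.reg} together with the $C^1$ estimate of \cite{ADL}); what the paper's kernel route buys in exchange is machinery that it reuses for the boundary analysis.

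The genuine gap is (c). Everything in \eqref{C1alpha}, and even the bare claim $u\in C^{1,\beta}(\overline{\Omega})$, lives in the boundary estimate that you announce (``I would then prove\dots either by\dots or by\dots'') but do not carry out, and both of your proposed routes are problematic. The Muckenhoupt route cannot cover the hypotheses \eqref{b-cond}: the coefficient $1/b\sim\varphi^{-\alpha}$ is an $A_2$ weight only for $\alpha<1$, while \eqref{b-cond} allows every $\alpha\geq 0$; the paper explicitly discards the Fabes--Jerison--Kenig/Fabes--Kenig--Serapioni framework \cite{Fa-Je-Se,Fa-ke-Se1} for exactly this reason, and the $A_{3/2}$ setting of \cite{Lake-B} concerns \eqref{b-cond2} with $\alpha<1/2$, not \eqref{b-cond}. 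The other route --- differentiating the Green function of \cite{br-me-lake} twice up to the degenerate boundary --- is itself the hard analysis, not a reduction of it. The paper's actual mechanism, which turns your ``crux'' (vanishing of $\nabla^{\perp}\psi$ at the exact order of $b$ with a $C^{1,\beta}$ quotient) from an expectation into a theorem, is the change of unknown $\Phi=\varphi^{-(\alpha+1)}\psi$, which solves $\varphi\Delta\Phi+(\alpha+2)\nabla\varphi\cdot\nabla\Phi+(\alpha+1)\Delta\varphi\,\Phi=f$; after straightening the boundary this becomes a degenerate operator of the form $x_n\sum_{j,k} b_{jk}\partial_j\partial_k+\sum_j b_j\partial_j+b_0$ with $b_2>0$ on $\{x_n=0\}$, to which the Schauder theory of Goulaouic--Shimakura \cite[Theo.~1]{gou-shima} applies: $f\in C^{\beta}$ yields $\Phi\in C^{\beta+1}$ and $x_n\Phi\in C^{\beta+2}$, whence $u=\varphi\nabla^{\perp}\Phi+(\alpha+1)\Phi\nabla^{\perp}\varphi\in C^{1,\beta}(\overline{\Omega})$. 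Without this substitution, or an equivalent device valid for all $\alpha\geq 0$, your step (c) remains an unproven program, so the proposal is incomplete precisely where you predicted the difficulty would lie.
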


Even if \eqref{C1-ln} seems to be weaker than \eqref{C1alpha}, the sublinear estimate of the velocity in terms of $\|\nabla f\|_{L^\infty}$ is crucial for the fixed-point iteration method used to prove the existence of classical solutions.

\begin{theo}\label{Th-WP}
Let $(\Omega,b)$ verifying \eqref{b-cond}. Let $\omega_{0}\in C^1_{c}(\Omega)$. There exists a unique pair $(u,\omega) \in (C^1([0,+\infty) \times \overline{\Omega}))^2$ solution to \eqref{Inviscid-vort} such that $\omega(0,\cdot) = \omega_{0}$. There exists a unique $u \in C^{1}([0,+\infty) \times \overline{\Omega})$ solution to \eqref{Inviscid-Lake} such that $\curl u(0,\cdot) = b\omega_{0}$. Moreover, for any $T>0$ there exists $\delta_{T}>0$ such that 
\[\dist(\supp \omega(t,\cdot),\partial\Omega)=\dist(\supp \curl u(t,\cdot),\partial\Omega)\geq \delta_{T} \quad \forall t\in [0,T].\]
\end{theo}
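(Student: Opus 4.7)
I would follow a Picard-type iteration on the vorticity formulation \eqref{Inviscid-vort}, playing the role that the usual vorticity iteration does in the classical construction of $C^{1}$-solutions to 2D Euler, with Theorem~\ref{ellip-est} replacing the standard elliptic theory. Uniqueness in $C^1$ is already granted by the Yudovich-type uniqueness of \cite{br-me-lake,la-pau} for bounded vorticities, so the task is to produce existence; the equivalence between \eqref{Inviscid-vort} and \eqref{Inviscid-Lake} is standard once regularity is in hand, by reconstructing $q$ via a Neumann-type elliptic problem deduced from the momentum equation. Starting from $\omega^{0}\equiv\omega_{0}$, given $\omega^{n}\in C^{1}([0,T]\times\overline{\Omega})$ with compact support in $\Omega$, I apply Theorem~\ref{ellip-est}(ii) to $f^{n}=b\,\omega^{n}\in C^{1}(\overline{\Omega})$ to obtain $u^{n}=b^{-1}\nabla^{\perp}\psi^{n}\in C^{1,\beta}(\overline{\Omega})$ with $u^{n}\cdot n=0$ on $\del\Omega$ (since $\psi^{n}|_{\del\Omega}=0$), integrate the flow $\partial_{t}X^{n}=u^{n}(t,X^{n})$ by Cauchy-Lipschitz (it preserves $\overline{\Omega}$), and define $\omega^{n+1}$ by transport: $\omega^{n+1}(t,X^{n}(t,x))=\omega_{0}(x)$. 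Conservation along the flow gives $\|\omega^{n}(t,\cdot)\|_{L^{\infty}}=\|\omega_{0}\|_{L^{\infty}}$, so \eqref{ell-reg}--\eqref{log-lip} yield an $L^{\infty}$ bound and a log-Lipschitz modulus for $u^{n}$ that are uniform in both $n$ and $t$.

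The crucial geometric step is to keep the supports of $\omega^{n}$ inside a fixed compact $K_{T}\Subset\Omega$ on $[0,T]$. Let $\rho(x)=\dist(x,\partial\Omega)$, which is smooth near $\partial\Omega$ with $\nabla\rho=n$ on $\partial\Omega$. For $x$ near the boundary with projection $\bar{x}\in\partial\Omega$, the log-Lipschitz estimate \eqref{log-lip} combined with $u^{n}(t,\bar x)\cdot n(\bar x)=0$ gives $|u^{n}(t,x)\cdot\nabla\rho(x)|\le C\rho(x)(1+|\ln\rho(x)|)$ with $C$ independent of $n$. Hence $\rho\circ X^{n}$ satisfies an Osgood-type differential inequality whose solutions starting from a positive value cannot reach zero in finite time, giving an $n$-independent lower bound $\dist(\supp\omega^{n}(t,\cdot),\partial\Omega)\ge\delta_{T}>0$. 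Once the supports are trapped in $K_{T}$, the interior bound \eqref{C1-ln} reads $\|\nabla u^{n}\|_{L^{\infty}(K_{T})}\le C_{T}\ln(2+\|\nabla\omega^{n}\|_{L^{\infty}})$, and Gronwall applied to $\partial_{t}DX^{n}=Du^{n}(t,X^{n})DX^{n}$ propagates as $\|\nabla\omega^{n+1}\|_{L^{\infty}}\le \|\nabla\omega_{0}\|_{L^{\infty}}(2+\|\nabla\omega^{n}\|_{L^{\infty}})^{C_{T}t}$. For $t^{*}\le 1/(2C_{T})$ this iteration is uniformly bounded in $n$, and a bootstrap in time (feasible because both $\|\omega\|_{L^{\infty}}$ and $\delta_{T}$ are preserved) extends uniform $C^{1}$ bounds to any $[0,T]$.

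Convergence of $(\omega^{n})$ in $C([0,T];L^{2}_{b}(\Omega))$ is then obtained by a Yudovich-style Osgood stability estimate on $\|\omega^{n+1}-\omega^{n}\|_{L^{2}_{b}}$, using the transport structure and the uniform log-Lipschitz modulus; continuity of the solution map for \eqref{ellip-eq} transfers this to convergence of $u^{n}$, and Arzel\`a-Ascoli on the $C^{1,\beta}$-bounded family upgrades it to convergence in $C^{1}([0,T]\times\overline{\Omega})$. The limit is a $C^{1}$ solution of \eqref{Inviscid-vort}, from which one recovers a $C^{1}$ solution of \eqref{Inviscid-Lake} by solving the elliptic problem for the pressure; global existence on $[0,+\infty)$ follows by iterating the construction in time. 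I expect the main obstacle to be the support-preservation step: unlike in the non-degenerate Euler setting, the velocity is only log-Lipschitz globally and its link with $\omega$ is distorted by the vanishing weight $b$ on the shore, so the safety distance must be extracted from the Osgood-type ODE built from Theorem~\ref{ellip-est}(i); without it, the sublinear interior estimate \eqref{C1-ln} could not be invoked and the $C^{1}$ bootstrap would not close.
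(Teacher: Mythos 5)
Your proposal is correct and follows essentially the same route as the paper: a Picard iteration on the vorticity transport equation in which Theorem~\ref{ellip-est} supplies the $L^\infty$ and log-Lipschitz bounds used in an Osgood-type argument confining $\supp\omega^{n}(t,\cdot)$ away from $\partial\Omega$ uniformly in $n$, the sublinear estimate \eqref{C1-ln} then closes the uniform $C^1$ bound on short time intervals followed by a bootstrap, and uniqueness is quoted from the Yudovich-type theory of \cite{br-me-lake,la-pau}. The only cosmetic differences are that the paper confines supports by comparing interior trajectories with trajectories started on $\partial\Omega$ (rather than through the distance function $\rho$), proves convergence by a plain geometric contraction of $\|\omega_{n+1}-\omega_{n}\|_{L^\infty}$ using $\|u_{n-1}-u_{n}\|_{L^\infty}\leq C\|\omega_{n-1}-\omega_{n}\|_{L^\infty}$ (no Osgood stability in $L^2_b$ is needed once the uniform $C^1$ bounds are in hand), and obtains the $C^1$ regularity of the limit vorticity explicitly via the representation $\omega=\omega_{0}\circ X^{-1}$ along the characteristics of the limit velocity, a step your Arzel\`a--Ascoli argument (which a priori only yields a Lipschitz limit for $\omega$) should spell out.
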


\subsection{Vanishing viscosity limit for the viscous lake equations}\label{def-viscous}

We define the following spaces
\begin{equation*}
H_{b}=\{u;u\in L^{2}_{b}(\Omega),\divv(bu)=0,\;bu\cdot n=0\; {\rm on}\,\, \del \Omega\},\\
\end{equation*}
\begin{equation*}
V_{b}=\{u;u\in H^{1}_{b}(\Omega),\divv(bu)=0,\;bu\cdot n=0\; {\rm on}\,\, \del \Omega\},
\end{equation*}
where we denoted by $L^{q}_{b}(\Omega)$ as the set of measurable functions $u$ on $\Omega$ such that
$$\| u \|_{L^{q}_{b}(\Omega)}:=\Big(\int_{\Omega}|u(x)|^{q} b(x)\,dx\Big)^{\frac{1}{q}}<\infty\qquad 1<q<\infty.$$

For a given viscosity $\mu>0$ and $u^{\mu}_{0}\in H_{b},$ we say that $u^{\mu}\in L^{\infty}(0,T; H_{b})\cap L^{2}(0,T; V_{b})\cap C([0,T]; H_{b}-weak)$ is a weak solution of \eqref{Viscous-lake}-\eqref{VS-B-NV} with initial velocity $u^{\mu}_{0}$ if the following properties hold:
\begin{itemize}
\item the initial condition $u^\mu_{0}$ holds in a weak sense:
\begin{equation*}
\Big(\int_{\Omega}u^{\mu}\cdot v\,b\,dx\Big)(0,x)=\int_{\Omega}u^{\mu}_{0}\cdot v\,b\,dx \quad \forall \; v\in V_{b};
\end{equation*}
\item the variational formulation holds in $\mathcal{D}^\prime (0,T)$:
 \begin{multline*}
\Dt \intO u^\mu \cdot v \,b\,dx+\intO ((u^\mu\cdot\nabla) u^\mu)\cdot v \,b\,dx
+2\mu\int_{\Omega}\D(u^{\mu}):\D(v)\,b\,dx\\+2\mu\int_{\Omega}\divv u^{\mu} \divv v\,b\,dx+\mu \int_{\del \Omega} \eta_\mu u^{\mu}\cdot v \,b\,ds=0\quad \forall v\in V_b.
\end{multline*}
\end{itemize}
The existence and uniqueness of weak solutions in the sense of the above definition was shown under the following assumptions on the domain ($\Omega,b$)
\begin{equation}\label{b-cond2}
\begin{split}
& \Omega \mbox{ a simply-connected open and bounded set where } \partial\Omega\in C^3, \\
&b(x)=c(x)\varphi^{\alpha}(x) \quad\mbox{with } 0\leq \alpha <\frac12 \mbox{ and } c(x)\geq c_{0}>0 \mbox{ on } \Omega,\\
& \Omega=\{\varphi>0\}\quad\mbox{with } c,\varphi\in C^{3}(\overline{\Omega}) \mbox{ and } \nabla \varphi\neq0 \mbox{ on } \del \Omega. 
 \end{split}
\end{equation}
The non-vanishing shore ($\alpha=0$) is an adaptation of standard proofs \cite{Lev-Sa}. In the vanishing case ($\alpha>0$), this result was obtained by the first author for {\it Muckenhoupt} class of weights. These family of weights was introduced by {\sc B. Muckenhoupt} in his seminal paper \cite{Muck}. It consists of those weights $w(x)$ for which the Hardy-Littlewood maximal operator is bounded on $L^p(dw)$. In \cite{Lake-B}, it was used that the above conditions \eqref{b-cond2} imply that the function $b$ belongs to the class $ \mathcal{A}_{3/2}$ of Muckenhoupt weights. For more details see \cite{Muck, Lake-B}.
Notice that, with our choice of $b$ \eqref{b-cond2}, the boundary integral term in the above weak formulation vanishes, that is it, the space $L^p_b(\del \Omega)$ has sense only if $\alpha=0$, see Sections 9.13 and 9.14 in \cite{Kufner-book}. Recall that the characterization of the trace operator when the weight $b$ behaves like a distance to the boundary was specified in \cite{Nik}. Despite this, we do not suppress the boundary integral term involving $b$ for the sake of possible generality in the future where the weight $b$ could be taken not identically zero at one component of the boundary, for instance on an island.

\medskip

The third main theorem of this article concerns the vanishing viscosity limit, which shows rigorously that the inviscid lake model is a relevant approximation for slightly viscous lakes.
\begin{theo}\label{Th-Vanishing}
Let $(\Omega,b)$ verifying \eqref{b-cond2}, $\eta>0$ and $\beta\in [0,1)$. Let $u^\mu$ be the unique solution of the viscous lake model with initial data $u^\mu_{0} \in L^{2}_b(\Omega)$, and $u$ the unique solution of the inviscid lake equation with initial data $u_{0}$ such that $\curl u_{0} \in C^{1}_c(\Omega)$. If $u_{0}^{\mu}$ converges to $u_{0}$ in $L^{2}_{b}(\Omega)$ as $\mu$ goes to zero and if $0\leq\eta_{\mu}\leq \eta \mu^{-\beta}$, then $u^{\mu}$ converges to $u$ in $L^\infty_{\rm loc}(\R^+ ; L^{2}_{b}(\Omega))$. More precisely, for every $T>0$, there exists $C_{T}>0$ which depends only on $\Omega,b, \eta, \beta,\curl u_{0}$ and $T$ (but independent of $u_{0}^{\mu}$ and $\mu$) such that
\[
\sup_{t\in [0,T]} \|u^{\mu}(t,\cdot)-u(t,\cdot)\|_{L^{2}_{b}(\Omega)}\leq C_{T} \big(\mu^{\frac{1-\beta}2} + \|\umu_{0} -u_{0}\|_{L^2_b(\Omega)} \big).
\]
\end{theo}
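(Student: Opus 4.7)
The plan is a Kato-type energy estimate comparing the weak solution $u^\mu$ of \eqref{Viscous-lake}-\eqref{VS-B-NV} to the classical solution $u$ of \eqref{Inviscid-Lake} produced by Theorem~\ref{Th-WP}. By that theorem $u\in C^1([0,T]\times\overline\Omega)$ satisfies $\divv(bu)=0$ and $bu\cdot n=0$, so $u(t,\cdot)\in V_b$ is admissible as a test function in the weak formulation of the viscous problem; moreover $\|\nabla u\|_{L^\infty([0,T]\times\Omega)}\leq C_T$ for a constant depending only on $\curl u_0$, $\Omega$, $b$, $T$.

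To derive an identity for $E(t):=\tfrac12\|u^\mu(t)-u(t)\|_{L^2_b}^2$, I would (i) test the weak formulation of \eqref{Viscous-lake} against $v=u^\mu$ to obtain the standard energy identity; (ii) test it against $v=u(t,\cdot)\in V_b$; (iii) pair the classical equation $\del_t u+(u\cdot\nabla)u+\nabla q=0$ with $bu^\mu$, the pressure term disappearing because $\divv(bu^\mu)=0$ and $bu^\mu\cdot n=0$; (iv) combine with the trivial $\tfrac12\Dt\|u\|_{L^2_b}^2=0$. Using the identity $(u^\mu\cdot\nabla)u^\mu-(u\cdot\nabla)u=(u^\mu\cdot\nabla)w+(w\cdot\nabla)u$ with $w:=u^\mu-u$, together with $\int_\Omega b(u^\mu\cdot\nabla)w\cdot w\,dx=0$ (since $\divv(bu^\mu)=0$ and $bu^\mu\cdot n=0$), these four relations assemble into
\begin{equation*}
\Dt E+\mathcal{E}^\mu=-\intO b(w\cdot\nabla)u\cdot w\,dx+2\mu\intO b\bigl(\D(u^\mu){:}\D(u)+\divv u^\mu \divv u\bigr)\,dx+\mu\intBO\eta_\mu u^\mu\cdot u\,b\,ds,
\end{equation*}
where the dissipation $\mathcal{E}^\mu:=2\mu\|\D(u^\mu)\|_{L^2_b}^2+2\mu\|\divv u^\mu\|_{L^2_b}^2+\mu\intBO\eta_\mu|u^\mu|^2 b\,ds$ is nonnegative.

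The three right-hand terms are then controlled as follows. The convection error is bounded by $\|\nabla u\|_{L^\infty}\|w\|_{L^2_b}^2\leq 2C_T\,E$. For the viscous cross term, Cauchy-Schwarz followed by Young's inequality absorbs $\mu\|\D(u^\mu)\|_{L^2_b}^2+\mu\|\divv u^\mu\|_{L^2_b}^2$ into $\mathcal{E}^\mu$ and leaves an $O(\mu)$ remainder, controlled by $\|\D(u)\|_{L^\infty(\Omega)}^2\|b\|_{L^1(\Omega)}$, both finite. For the boundary drag cross term, $0\leq\eta_\mu\leq\eta\mu^{-\beta}$ and Young's inequality give
\[
\mu\intBO\eta_\mu u^\mu\cdot u\,b\,ds\leq \tfrac12\mu\intBO\eta_\mu|u^\mu|^2 b\,ds+\tfrac{\eta}{2}\mu^{1-\beta}\intBO|u|^2 b\,ds,
\]
the first term being absorbed in $\mathcal{E}^\mu$, while the second is $O(\mu^{1-\beta})$ when $\alpha=0$ and is identically zero when $\alpha\in(0,\tfrac12)$ since then $b\equiv 0$ on $\del\Omega$. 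As $\mu^{1-\beta}\geq\mu$ for small $\mu$ and $\beta\in[0,1)$, the dominant error is $\mu^{1-\beta}$, whence $\Dt E\leq 2C_T E+K_T\mu^{1-\beta}$; Gr\"onwall's lemma yields $E(t)\leq e^{2C_T T}\bigl(E(0)+K_T T\,\mu^{1-\beta}\bigr)$ on $[0,T]$, and taking square roots produces the claimed rate.

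The main technical obstacle is to justify the formal manipulations at the regularity level of a weak solution: testing the variational formulation against the time-dependent $v(t,\cdot)=u(t,\cdot)$ requires either a Steklov average in time or a density argument, and one must show that $b\,w\cdot\del_t u^\mu$ and the various integration-by-parts identities (in particular the normal-normal contribution of the viscous stress on $\del\Omega$) are well-defined in the degenerate regime with $b$ vanishing on $\del\Omega$. A subsidiary subtle point is the sharp Young-inequality balancing of the drag cross term, which is precisely what converts the hypothesis $\eta_\mu\leq\eta\mu^{-\beta}$ into the exponent $(1-\beta)/2$ in the final rate rather than a smaller one.
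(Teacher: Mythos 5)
Your proposal is correct and takes essentially the same route as the paper: a Kato-type energy estimate for $w^{\mu}=u^{\mu}-u$, testing with $w^{\mu}$ (justified because the classical solution of Theorem~\ref{Th-WP} lies in $V_b$), controlling the convection term by the $\|\nabla u\|_{L^\infty}$ bound, absorbing the viscous and drag cross terms into the dissipation, keeping $O(\mu)+O(\mu^{1-\beta})$ remainders (with the boundary term vanishing when $\alpha>0$), and concluding by Gr\"onwall. The only difference is cosmetic: where you use Cauchy--Schwarz and Young's inequality to absorb the cross terms, the paper completes the square exactly via the identity $2(x\cdot z)=2\big|z+\frac{y}{2}\big|^{2}-\frac{1}{2}|y|^{2}$, which amounts to the same balancing.
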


In particular, we can consider $u_{0}^{\mu}=u_{0}$ in the above theorem. For $b$ vanishing on $\partial \Omega$, which is the case in \eqref{b-cond2} when $\alpha>0$, we need no assumption on $\eta_{\mu}$ because it should not play any role in the variational formulation, and we could replace $\beta$ by zero in the previous estimate.

\subsection{Plan of the paper}

The rest of this article is dedicated to proving these three theorems. The next section provides the proof of Theorem~\ref{ellip-est}, which constitutes of two subsections. In the first subsection, we show that the estimates presented in Theorem~\ref{ellip-est} hold locally in $\Omega$, while in the last one, we prove that the estimates \eqref{ell-reg}-\eqref{log-lip} and \eqref{C1alpha} hold up to the boundary. The tools used in each subsection are different, and they are also of independent interest. The main point in proving the local estimates is to show the link between the green kernel associated to \eqref{ellip-eq} and the Laplace's fundamental solution. Concerning the proof near the boundary, the main ingredients are the papers \cite{BCM, gou-shima} and \cite{br-me-lake}, which are devoted on elliptic estimates for a class of degenerate equations similar to \eqref{ellip-eq} studied in this paper. We will follow the same lines introduced in \cite{br-me-lake} for our proof of the log-Lipschitz estimate \eqref{log-lip} near the boundary.

Having obtained Theorem~\ref{ellip-est}, Section~\ref{sec-pointfixed} aims to prove the existence of classical $C^1-$solutions of \eqref{Inviscid-Lake}, namely Theorem~\ref{Th-WP}, by an usual fixed-point iteration method. 
In Section~\ref{section-van-vis}, we study the vanishing viscosity limit of the solutions to the viscous lake model, and using energy methods we give a proof of Theorem~\ref{Th-Vanishing}.

\section{Regularity of solutions for degenerated elliptic problems: proof of Theorem~\ref{ellip-est}}

Let $(\Omega,b)$ verifying \eqref{b-cond}, this section is dedicated to the proof of Theorem~\ref{ellip-est}. We omit the non-vanishing topography case ($\alpha=0$) because the estimates in this case follow from \cite{ag-do-ni2, Gi-Tru}\footnote{Except \eqref{C1-ln} which is less classical, but it would be clear that Section~\ref{sec-loc-est} can be applied for $b\geq c>0$.}. For the vanishing case, the degeneracy of the topography $b\vert_{\partial\Omega} = 0$ raises many mathematical difficulties because our equations turn out to be either degenerate \eqref{Viscous-lake}-\eqref{Inviscid-Lake} or singular \eqref{ellip-eq}. When $\alpha>0$, we can choose, without any loss of generality, $c\equiv 1$ in \eqref{b-cond} and \eqref{b-cond2}, or on the contrary, in the neighborhood of the boundary, we can decide to keep $c$ and to replace $\varphi$ by $\dist(x,\del \Omega)$.

When $f\in L^p(\Omega)$ for some $p>1$, the existence and the uniqueness of $\psi\in X_{b}(\Omega)$ for \eqref{ellip-eq} is already known by Lax-Milgram type argument, by using that $b$ is bounded (see, e.g., \cite[Prop. 2.3]{la-pau}). From the energy estimate, we deduce that
\begin{equation}\label{energy}
\| \sqrt b u \|_{L^2(\Omega)} = \Big\| \frac{\nabla\psi}{\sqrt b} \Big\|_{L^2(\Omega)} \leq C \| f \|_{L^\infty (\Omega)}, 
\end{equation}
where $C$ depends only on $\|b\|_{L^\infty(\Omega)}$ and $\Omega$. Let us recall that $C^\infty_{c}(\Omega)$ is dense in $X_{b}(\Omega)$ \cite[Lem. 2.1]{la-pau}.

\subsection{Local elliptic regularity}\label{sec-loc-est} 
In this subsection, we fix $K\Subset \Omega$ and we look for elliptic regularity in $K$. Even if elliptic problems are not local, it is clear that singularities at the boundary will not give singular behavior in $K$ and we already know that local elliptic regularity would give estimates on $\psi$ in $W^{2,p}(K)-$norms. Nevertheless, for \eqref{log-lip} and \eqref{C1-ln}, we need some estimates which are very close to the known estimates in the full plane, and where the precise formula of Laplace's fundamental solution is crucial, in particular for \eqref{C1-ln}. For instance, for the fixed-point procedure used in the proof of existence of classical solutions, it will be important to have a sublinear estimate of the velocity in terms of $\|\omega\|_{C^1}$ similar to estimate \eqref{C1-ln}. For this reason, we are interested by a relation between the Green kernel associated to \eqref{ellip-eq} and the Green kernel in the full plane. Hence, we consider $G_{\Omega,b}$ solution of the following system for all $y\in \Omega$:
\begin{equation}\label{eq.Green}
\left\{\begin{aligned}
& \divv_{x}\Big( \frac1{b(x)} \nabla_{x} G_{\Omega,b}(x,y)\Big)=\delta(x-y)\text{ in }\mathcal{D}'(\Omega), \\
 & G_{\Omega,b}(x,y)=G_{\Omega,b}(y,x)\text{ for all }x\in \Omega, \ G_{\Omega,b}(x,y)=0 \text{ for all }x\in \partial\Omega,
\end{aligned}\right.
\end{equation}
such that the solution of \eqref{ellip-eq} can be written as $\psi(x)=\int_{\Omega} G_{\Omega,b}(x,y) f(y)\, dy$.

The Green kernel in the full plane is $G_{\R^2}(x,y)=\frac1{2\pi}\ln|x-y|$ whereas in the disk becomes
\[
G_{D}(x,y)=\frac1{2\pi} \ln \frac{|x-y|}{|x-y^*||y|},
\]
with the notation $z^*=z/|z|^2$. Such a formula can be adapted to any simply connected bounded set $\Omega$ thanks to a Riemann mapping $\mathcal{T}:\ \Omega\to B(0,1)$:
\[
G_{\Omega}(x,y)=\frac1{2\pi} \ln \frac{|\mathcal{T}(x)-\mathcal{T}(y)|}{|\mathcal{T}(x)-\mathcal{T}(y)^*||\mathcal{T}(y)|}.
\]
In a recent article \cite{DekeyVanSch}, {\sc J. Dekeyser} and {\sc J. Van Schaftingen} have noticed that $(x,y)\mapsto G_{\Omega}(x,y)\sqrt{b(x)b(y)}$ satisfies almost \eqref{eq.Green}, up to a corrector of lower order. Unfortunately, that article considered only the non-vanishing topography case $b(x)\geq c_{0}>0$, so in the sequel we give the details of its generalization to the vanishing topography case.

For all $y\in \Omega$, we define $x\mapsto S_{\Omega,b}(x,y)$ such that 
\begin{equation}\label{eq.GreenSb}
\left\{\begin{aligned}
& \divv_{x}\Big( \frac1{b(x)} \nabla_{x} S_{\Omega,b}(x,y)\Big)= G_{\Omega}(x,y) \sqrt{b(y)} \Delta\frac1{\sqrt{b(x)}} \text{ in }\mathcal{D}'(\Omega), \\
 & G_{\Omega,b}(x,y)=0 \text{ for all }x\in \partial\Omega.
\end{aligned}\right.
\end{equation}
In the case of a vanishing topography, the existence of $S_{\Omega,b}(\cdot,y)$ is not obvious because $ \Delta\frac1{\sqrt{b(x)}}$ is not integrable.

\begin{lem}\label{source.Lp}
 Let $(\Omega,b)$ verifying \eqref{b-cond} and let $y\in \Omega$. There exists a unique solution $S_{\Omega,b}(\cdot,y)\in X_{b}(\Omega)$ to \eqref{eq.GreenSb}. Moreover, for any $\delta>0$, there exists $C_{\delta}>0$ which depends only on $\Omega,b$ and $\delta$ such that
 \[
 \Big\| \frac1{\sqrt{b}} \nabla_{x}S_{\Omega,b}(\cdot,y) \Big\|_{L^2(\Omega)} \leq C_{\delta}, \quad \text{for all $y\in \Omega$ such that }{\rm dist}(y,\partial\Omega)\geq \delta.
 \]
 \end{lem}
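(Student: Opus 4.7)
The plan is to apply the Lax--Milgram theorem on the Hilbert space $X_{b}(\Omega)$ endowed with the inner product $\langle\psi_{1},\psi_{2}\rangle_{X_{b}}=\int_{\Omega}\nabla\psi_{1}\cdot\nabla\psi_{2}/b\,dx$; the bilinear form $a(S,\phi)=\int_{\Omega}\nabla S\cdot\nabla\phi/b\,dx$ is then continuous and coercive by construction. The whole difficulty is to recast the right-hand side of \eqref{eq.GreenSb} as a continuous linear form on $X_{b}$. For $\phi\in C^{\infty}_{c}(\Omega)$, since $1/\sqrt{b}$ is $C^{\infty}$ on $\supp\phi\Subset\Omega$, I integrate by parts and set
\[
L_{y}(\phi):=-\int_{\Omega}G_{\Omega}(\cdot,y)\sqrt{b(y)}\,\Delta\tfrac{1}{\sqrt{b}}\,\phi\,dx=\sqrt{b(y)}\int_{\Omega}\nabla\tfrac{1}{\sqrt{b}}\cdot(\phi\,\nabla_{x}G_{\Omega}+G_{\Omega}\nabla\phi)\,dx=:I_{1}(\phi)+I_{2}(\phi).
\]
Since $C^{\infty}_{c}(\Omega)$ is dense in $X_{b}$, it suffices to show $|L_{y}(\phi)|\le C_{\delta}\|\phi\|_{X_{b}}$ uniformly for $y$ with $\dist(y,\partial\Omega)\ge\delta$.

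The term $I_{2}$ is direct: the identity $\nabla(1/\sqrt{b})=-\nabla b/(2b^{3/2})$ and Cauchy--Schwarz give $|I_{2}(\phi)|\lesssim\sqrt{b(y)}\|G_{\Omega}(\cdot,y)\nabla b/b\|_{L^{2}(\Omega)}\|\nabla\phi/\sqrt{b}\|_{L^{2}(\Omega)}$. Since $b=c\varphi^{\alpha}$ with $c\ge c_{0}>0$ one has $|\nabla b/b|\lesssim 1/\varphi$, while the Dirichlet Laplace Green kernel in the $C^{3}$ domain $\Omega$ satisfies $|G_{\Omega}(x,y)|\le C_{\delta}\varphi(x)$ near $\partial\Omega$ with only a logarithmic singularity at $x=y$; hence $\|G_{\Omega}(\cdot,y)\nabla b/b\|_{L^{2}}\le C_{\delta}$ uniformly in $y$. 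The delicate term is $I_{1}$, which I would split with a cutoff $\chi\in C^{\infty}_{c}(\Omega)$ supported in $B(y,\delta/2)\Subset\Omega$. On $\supp\chi$, $b$ is bounded below by $c_{\delta/2}>0$ so $\nabla(1/\sqrt{b})$ is smooth and bounded; H\"older with exponents $1<p'<2<p<\infty$, the embedding $X_{b}\hookrightarrow H^{1}_{0}(\Omega)\hookrightarrow L^{p}(\Omega)$, and the local $L^{p'}$-integrability of $\nabla_{x}G_{\Omega}(\cdot,y)\sim 1/|x-y|$ control the inner contribution by $C_{\delta}\|\phi\|_{X_{b}}$. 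On the outer region $\{|x-y|\ge \delta/2\}$, standard boundary regularity of the Laplace Green kernel gives $|\nabla_{x}G_{\Omega}|\le C_{\delta}$, whereas $|\nabla(1/\sqrt{b})|\lesssim\varphi^{-1-\alpha/2}$, so the integrand is bounded by $C_{\delta}|\phi|/\varphi^{1+\alpha/2}$. The weighted Hardy inequality
\[
\int_{\Omega}\frac{|\phi|^{2}}{\varphi^{2+\alpha}}\,dx\le C\int_{\Omega}\frac{|\nabla\phi|^{2}}{\varphi^{\alpha}}\,dx\le C\|\phi\|_{X_{b}}^{2},
\]
valid for every $\alpha\ge 0$ since the one-dimensional Muckenhoupt condition $\sup_{r>0}\big(\int_{r}^{1}s^{-2-\alpha}ds\big)\big(\int_{0}^{r}s^{\alpha}ds\big)<\infty$ is satisfied, together with Cauchy--Schwarz on the bounded set $\Omega$, then yields $\int_{\Omega}|\phi|/\varphi^{1+\alpha/2}\,dx\le C\|\phi\|_{X_{b}}$, so $|I_{1}(\phi)|\le C_{\delta}\|\phi\|_{X_{b}}$.

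Lax--Milgram then delivers a unique $S_{\Omega,b}(\cdot,y)\in X_{b}$ satisfying $a(S_{\Omega,b},\phi)=L_{y}(\phi)$ for every $\phi\in X_{b}$, with the announced bound; reversing the integration by parts on test functions $\phi\in C^{\infty}_{c}(\Omega)$ (legitimate because $1/\sqrt{b}$ is smooth on $\supp\phi$) recovers \eqref{eq.GreenSb} in $\mathcal{D}'(\Omega)$. The main obstacle is the estimate of $I_{1}$ away from $y$: naive pointwise bounds degrade near $\partial\Omega$ as $\alpha$ grows, and it is precisely the weighted Hardy inequality, whose availability for every $\alpha\ge 0$ matches the hypotheses of \eqref{b-cond}, that closes the argument.
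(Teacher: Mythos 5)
Your proposal is correct and takes essentially the same route as the paper: both arguments reduce the lemma to the boundedness on $X_{b}(\Omega)$ of the very same linear functional $\phi\mapsto\sqrt{b(y)}\int_{\Omega}\nabla\frac{1}{\sqrt{b}}\cdot\nabla_{x}\big(G_{\Omega}(\cdot,y)\,\phi\big)\,dx$, proved by the same near-$y$/far-from-$y$ splitting (logarithmic and $|x-y|^{-1}$ singularities handled locally via $L^{p}$ duality and Sobolev embedding, the far-field closed by the cancellation $b^{1/2}\varphi\,\nabla\frac{1}{\sqrt b}=O(1)$ together with the weighted Hardy inequality $\int_{\Omega}|\phi|^{2}\varphi^{-2-\alpha}\leq C\int_{\Omega}|\nabla\phi|^{2}\varphi^{-\alpha}$, which the paper imports from \cite[Lem.~2.2]{la-pau} and you rederive via the one-dimensional Muckenhoupt condition). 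The remaining differences are cosmetic: you apply Lax--Milgram/Riesz directly where the paper minimizes the associated energy functional (calling this itself a ``standard Lax-Milgram type argument''), and you quote standard boundary estimates for the Dirichlet Green kernel ($|G_{\Omega}(x,y)|\leq C_{\delta}\varphi(x)$ and $|\nabla_{x}G_{\Omega}(x,y)|\leq C_{\delta}$ for $|x-y|\geq\delta/2$) where the paper derives the same bounds explicitly from the Riemann mapping and the identity $|X-Y^{*}|^{2}|Y|^{2}=|X-Y|^{2}+(1-|X|^{2})(1-|Y|^{2})$.
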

\begin{proof}
For $\delta>0$ and $y\in \Omega$ fixed such that ${\rm dist}(y,\partial\Omega)\geq \delta$, we introduce the functional
\[
E(\psi)=\int_{\Omega}\Big( \frac1{2b(x)}|\nabla \psi(x)|^2- \sqrt{b(y)} \nabla\frac1{\sqrt{b(x)}} \cdot \nabla_{x}(G_{\Omega}(x,y) \psi(x) )\Big)\, dx.
\]
The first step is to show that $E$ is well defined on $X_{b}(\Omega)$, i.e., we start by studying the following integrals
\begin{align*}
 I_{1}&=\int_{\Omega} \psi(x) \nabla\frac1{\sqrt{b(x)}} \cdot \nabla_{x}G_{\Omega}(x,y) \, dx, \\
 I_{2}&=\int_{\Omega} G_{\Omega}(x,y) \nabla\frac1{\sqrt{b(x)}} \cdot \nabla \psi(x) \, dx.
\end{align*}

The first integral corresponds to the usual Biot-Savart law in a bounded simply connected open set. By the Cauchy-Riemann equations and the $C^1$ regularity of the Riemann mapping, we have
\begin{align*}
 I_{1}=&\frac1{2\pi}\int_{\Omega}\psi(x) \nabla\frac1{\sqrt{b(x)}} \cdot D\mathcal{T}^T(x) \Big( \frac{\mathcal{T}(x)-\mathcal{T}(y)}{|\mathcal{T}(x)-\mathcal{T}(y)|^2}-\frac{\mathcal{T}(x)-\mathcal{T}(y)^*}{|\mathcal{T}(x)-\mathcal{T}(y)^*|^2} \Big) \, dx\\
 |I_{1} | \leq & C_{\mathcal{T}}\int_{\Omega} \Big( \frac{F(x)}{|\mathcal{T}(x)-\mathcal{T}(y)|} + \frac{F(x)}{|\mathcal{T}(x)-\mathcal{T}(y)^*|} \Big) \, dx
\end{align*}
where $F = |\psi\nabla\frac1{\sqrt{b}} |$. By the continuity of $\mathcal{T}$, there exists $C_{\delta}$ such that
\[
|\mathcal{T}(x)-\mathcal{T}(y)^*| \geq |\mathcal{T}(y)^*|-1\geq C_{\delta}\quad \forall (x,y)\in \Omega^2\text{ such that }{\rm dist}(y,\partial\Omega)\geq \delta.
\]
For the first-right hand side term, we split the integral into the integral on $B(y,\delta/2)$ and its complement $\Omega\setminus B(y,\delta/2)$. In the disk, we use that ${\rm dist}(x,\partial\Omega)\geq \delta/2$, hence $|\nabla\frac1{\sqrt{b}}|$ is bounded:
\[
\int_{\Omega\cap B(y,\delta/2)}\frac{F(x)}{|\mathcal{T}(x)-\mathcal{T}(y)|} \, dx\leq C_{\delta, \mathcal{T}} \|\psi\|_{L^4} .
\]
In the exterior of the disk 
\begin{equation}\label{ineTxTy}
\delta/2\leq |x-y| \leq \|D\mathcal{T}^{-1}\|_{L^\infty} |\mathcal{T}(x)-\mathcal{T}(y)| 
\end{equation}
so 
\begin{align*}
|I_{1}|&\leq C_{\delta, \mathcal{T}} \Big( \|\psi\|_{L^4} + \int_{\Omega} |\psi\nabla\frac1{\sqrt{b}} | \Big)\\
&\leq C_{\delta, \mathcal{T}} \Big( \|\psi\|_{H^1} + \Big\|b^{-1/2} \frac{\psi}d\Big\|_{L^2(\Omega)} \Big\| b^{1/2} d \nabla\frac1{\sqrt{b}} \Big\|_{L^2(\Omega)} \Big) \\
&\leq C_{\delta, \mathcal{T}} \Big( \|\nabla \psi\|_{L^2} + \Big\| b^{-1/2} \nabla \psi \Big\|_{L^2(\Omega)} \Big\| \frac{d^{1+\frac\alpha2}}{d^{1+\frac\alpha2} } \Big\|_{L^2(\Omega)} \Big)\\
&\leq C_{\delta,\Omega,b} \Big\| b^{-1/2} \nabla \psi \Big\|_{L^2(\Omega)}
\end{align*}
where we have used the embedding of $H^1$ in $L^4$, the Poincar\'e inequality in $\Omega$, $b$ bounded, and the Hardy inequality \cite[Lem. 2.2]{la-pau}.

For $I_{2}$, we first notice by an easy computation that
\[
|X-Y^*|^2|Y|^2= |X-Y|^2+ (1-|X|^2)(1-|Y|^2)
\]
which gives a sense to $G_{D}$ even if $Y=0$. This relation will be used several times in the sequel. 
 We again split $\Omega$ into two regions $\Omega=B(y,\delta/2) \cup (\Omega\setminus B(y,\delta/2))$.
 
As $\mathcal{T}$ maps from $\Omega$ to $B(0,1)$, we have
\[
|\mathcal{T}(x)-\mathcal{T}(y)^*|^2 |\mathcal{T}(y)|^2 \leq 5.
\]
This implies that
\[
| G_{\Omega}(x,y) | = \frac1{4\pi} \ln \frac{|\mathcal{T}(x)-\mathcal{T}(y)^*|^2|\mathcal{T}(y)|^2}{|\mathcal{T}(x)-\mathcal{T}(y)|^2} \leq \frac1{4\pi} \ln 5 - \frac1{2\pi} \ln |\mathcal{T}(x)-\mathcal{T}(y)|
\]
which belongs to $L^p(B(y,\delta/2))$ for any $p\in [1,\infty)$. As $ \nabla\frac1{\sqrt{b(x)}}$ is bounded in $B(y,\delta/2)$, it is clear that 
\[
\int_{\Omega\cap B(y,\delta/2)} |G_{\Omega}(x,y)| \Big| \nabla\frac1{\sqrt{b(x)}} \cdot \nabla \psi(x) \Big| \, dx\leq C_{\delta, \mathcal{T}} \|\nabla \psi\|_{L^2} .
\]

Outside $B(y,\delta/2)$, we use \eqref{ineTxTy} to deduce
\begin{align*}
| G_{\Omega}(x,y) | &= \frac1{4\pi} \ln \Big(1+ \frac{(1-|\mathcal{T}(x)|^2)(1-|\mathcal{T}(y)|^2) }{|\mathcal{T}(x)-\mathcal{T}(y)|^2}\Big)\\
& \leq \frac1{4\pi} \frac{(1-|\mathcal{T}(x)|^2)(1-|\mathcal{T}(y)|^2) }{|\mathcal{T}(x)-\mathcal{T}(y)|^2} \leq C_{\delta,\Omega,b} d(x).
\end{align*}
where we have used that $\mathcal{T}$ is $C^1$ up to the boundary. This allows us to write
\begin{align*}
|I_{2}|&\leq C_{\delta, \mathcal{T}} \Big( \|\nabla \psi\|_{L^2} + \int_{\Omega}\Big|d(x) \nabla\frac1{\sqrt{b(x)}} \cdot \nabla \psi(x) \Big| \Big)\\
&\leq C_{\delta, \mathcal{T}} \Big( \|\nabla \psi\|_{L^2} + \Big\| b^{-1/2} \nabla \psi \Big\|_{L^2(\Omega)} \Big\| \frac{d^{1+\frac\alpha2}}{d^{1+\frac\alpha2} } \Big\|_{L^2(\Omega)} \Big)\\
&\leq C_{\delta,\Omega,b} \Big\| b^{-1/2} \nabla \psi \Big\|_{L^2(\Omega)}.
\end{align*}

Putting together the estimates of $I_{1}$ and $I_{2}$, we conclude that $E(\psi)$ is well defined for any $\psi\in X_{b}(\Omega)$. More precisely, there exists $C_{\delta}$ depending only on $\Omega,b,\delta$ such that for all $y\in \Omega$ verifying that ${\rm dist}(y,\partial\Omega)\geq \delta$ and all $\psi\in X_{b}(\Omega)$ we have
\[
\Big| E(\psi) - \frac12\Big\| b^{-1/2} \nabla \psi \Big\|_{L^2(\Omega)}^2 \Big| \leq C_{\delta} \Big\| b^{-1/2} \nabla \psi \Big\|_{L^2(\Omega)}.
\]
So
\[
 \frac14\Big\| b^{-1/2} \nabla \psi \Big\|_{L^2(\Omega)}^2 \leq E(\psi)+ C_{\delta}^2 \quad\text{and}\quad E(\psi) \leq \frac34\Big\| b^{-1/2} \nabla \psi \Big\|_{L^2(\Omega)}^2 + C_{\delta}^2 .
\]

This inequality allows to adapt the standard Lax-Milgram type argument, which also gave the existence of a solution of \eqref{ellip-eq}: Let $\psi_{k}$ be a minimizing sequence, $\psi_{k}$ is uniformly bounded in $X_{b}(\Omega)$. Up to a subsequence, we assume that $\psi_{k}\rightharpoonup \psi$ weakly in $X_{b}(\Omega)$. By the lower semi-continuity of the norm, it follows that $E(\psi)\leq \liminf_{k\to\infty} E(\psi_{k})$, so $\psi \in X_{b}(\Omega)$ is indeed a minimizer. In addition, by minimization, the first variation of $E(\psi)$ reads
\[
\int_{\Omega}\Big( \frac1{b(x)}\nabla \psi(x) \cdot \nabla \varphi - \sqrt{b(y)} \nabla\frac1{\sqrt{b(x)}} \cdot \nabla_{x}(G_{\Omega}(x,y) \varphi(x) )\Big)\, dx,
\quad \forall \varphi\in C^\infty_{c}(\Omega),
\]
which shows that $S_{\Omega,b}(\cdot,y):=\psi$ is a solution of \eqref{eq.GreenSb}. We recall that the Dirichlet boundary condition is encoded in the function space $X_{b}(\Omega)$. The uniqueness follows from the uniqueness of \eqref{ellip-eq} when $f\equiv 0$.

Fixing any $\psi_{0}\in C^\infty_{c}(\Omega)$, we have proved that
\[
 \frac14\Big\| b^{-1/2} \nabla_{x} S_{\Omega,b}(\cdot,y) \Big\|_{L^2(\Omega)}^2 \leq E(\psi_{0})+ C_{\delta}^2,\quad \forall y\in \Omega \text{ such that } {\rm dist}(y,\partial\Omega)\geq \delta.
\]
This ends the proof of this lemma. 
\end{proof}

Thanks to the existence of $(x,y)\mapsto S_{b}(x,y)$, we can verify that for any $f\in C^\infty_{c}(\Omega)$, the function $\Phi(x)$ defined below
\[
\Phi (x):= G_{\Omega,b}[f](x) := \int_{\Omega} \Big(G_{\Omega}(x,y)\sqrt{b(x)}\sqrt{b(y)} + S_{\Omega,b}(x,y)\Big) f(y)\, dy
\]
is a solution of \eqref{ellip-eq}. Indeed, it is clear that $\Phi$ satisfies the Dirichlet boundary condition. For $\delta>0$ such that $\dist(\supp f,\partial\Omega)>\delta$, we use Lemma~\ref{source.Lp} to state that 
\[
x\mapsto \int_{\Omega} \frac1{\sqrt {b(x)}} \nabla_{x}S_{\Omega,b}(x,y) f(y)\, dy \in L^2(\Omega).
\]
Regarding the estimates made for $I_{1}$ and $I_{2}$ in the previous proof, we have
\begin{align*}
& \Big|\int_{\Omega} \Big( \nabla_{x} G_{\Omega}(x,y)+ G_{\Omega}(x,y)\frac{\nabla b(x) }{2 b(x)} \Big)\sqrt{b(y)} f(y)\, dy\Big|\\
 &\leq C_{\mathcal{T}} \int_{\Omega} \frac{|\sqrt{b}f|(y)}{|x-y|} \, dy+C_{\delta,\mathcal{T}} \|\sqrt{b}f\|_{L^1}+C_{\delta,b,\mathcal{T}} \|f\|_{L^2}+C_{\delta,\mathcal{T}}\frac{d(x)|\nabla b(x)| }{|b(x)|} \|\sqrt{b}f\|_{L^1}\\
 &\leq C_{\delta,b,\mathcal{T}} \|f\|_{L^4}
\end{align*}
as $\frac{d(x)|\nabla b(x)| }{|b(x)|}$ is uniformly bounded. This allows to conclude that $\frac1{\sqrt{ b(x)}} \nabla \Phi (x)$ belongs to $L^2(\Omega)$. And finally, we verify that for any $\psi\in C^\infty_{c}(\Omega)$
\begin{align*}
 \Big\langle \divv \frac{ \nabla \Phi}{b} , \psi \Big\rangle
 =& -\iint_{\Omega^2} \frac{\sqrt{b(y)}}{\sqrt{b(x)}} \Big(\nabla_{x} G_{\Omega}(x,y)+ G_{\Omega}(x,y)\frac{\nabla b(x) }{2 b(x)} \Big)\cdot \nabla \psi(x) f(y)\, dydx\\
 &-\iint_{\Omega^2} \frac1{ b(x)} \nabla_{x}S_{\Omega,b}(x,y) \cdot \nabla \psi(x) f(y)\, dydx\\
 =& -\iint_{\Omega^2} \frac{\sqrt{b(y)}}{\sqrt{b(x)}} \Big(\nabla_{x} G_{\Omega}(x,y)+ G_{\Omega}(x,y)\frac{\nabla b(x) }{2 b(x)} \Big)\cdot \nabla \psi(x) f(y)\, dxdy\\
& - \iint_{\Omega^2} \sqrt{b(y)} \nabla\frac1{\sqrt{b(x)}} \cdot \nabla_{x}(G_{\Omega}(x,y) \psi(x)) f(y)\, dxdy\\
=& -\int_{\Omega} \sqrt{b(y)} f(y)\int_{\Omega} \nabla_{x} G_{\Omega}(x,y)\cdot \nabla\Big( \frac1{\sqrt{b(x)}}\psi(x)\Big) \, dxdy\\
=& \int_{\Omega}f(y)\psi(y)\, dy
\end{align*}
which means that $\Phi$ is a solution of \eqref{ellip-eq}. As it is clear that for any $\psi,\varphi\in C^\infty_{c}(\Omega)$
\begin{multline*}
\int_{\Omega} \psi(x)G_{\Omega,b}[\varphi](x) \, dx= \Big\langle \divv \frac1{b} \nabla G_{\Omega,b}[\psi] , G_{\Omega,b}[\varphi]\Big\rangle\\ 
= -\int_{\Omega }\frac1b \nabla G_{\Omega,b}[\psi] \cdot \nabla G_{\Omega,b}[\varphi] = \int_{\Omega} \varphi(x)G_{\Omega,b}[\psi](x) \, dx ,
\end{multline*}
so we deduce by the symmetry of $(x,y)\mapsto G_{\Omega}(x,y)\sqrt{b(x)}\sqrt{b(y)}$ that
\[
\int_{\Omega}\int_{\Omega} \psi(x)\varphi(y) S_{\Omega,b}(x,y) \, dxdy=\int_{\Omega}\int_{\Omega} \psi(y)\varphi(x) S_{\Omega,b}(x,y) \, dxdy
\]
which reads as $S_{\Omega,b}(x,y)=S_{\Omega,b}(y,x)$. This ends the proof that 
\begin{equation}\label{Green-decomp}
G_{\Omega,b}(x,y):=G_{\Omega}(x,y)\sqrt{b(x)}\sqrt{b(y)}+S_{\Omega,b}(x,y)
 \end{equation}
is the green kernel \eqref{eq.Green} associated to the elliptic problem \eqref{ellip-eq}.

\medskip

The decomposition \eqref{Green-decomp} will be useful after proving that the remainder $x\mapsto S_{\Omega,b}(x,y)$ is more regular than $G_{\Omega,b}(x,y)$.

\begin{lem}\label{S.reg}
 Let $(\Omega,b)$ verifying \eqref{b-cond}. Let $K,K'\Subset \Omega$, then there exist $C_{K,K'}>0$ such that for any $f\in L^\infty(\Omega)$ compactly supported in $K'$, the function
 \[
 \tilde\Phi(x):=\int_{\Omega}S_{\Omega,b}(x,y)f(y)\, dy
 \]
 is of class $C^2(\Omega)$ and 
 \[
 \| \tilde\Phi \|_{W^{2,\infty}(K)} \leq C_{K,K'} \|f\|_{L^\infty(\Omega)}.
 \]
 \end{lem}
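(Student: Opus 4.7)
My plan is to derive a global elliptic equation satisfied by $\tilde\Phi$, use the compact support of $f$ to get a control of $\tilde\Phi$ in $X_{b}(\Omega)$, and then appeal to interior elliptic regularity on a neighborhood of $K$ where the weight $b$ is uniformly positive and smooth. Integrating \eqref{eq.GreenSb} against $f(y)\,dy$ (the exchange of $\divv_{x}$ with $\int\cdot\,f(y)\,dy$ being justified by testing against $\varphi\in C^\infty_{c}(\Omega)$, using Fubini and the $L^{2}$ control of $b^{-1/2}\nabla_{x}S_{\Omega,b}(\cdot,y)$ furnished by Lemma~\ref{source.Lp}, which holds uniformly for $y\in K'$ since $\dist(K',\partial\Omega)\geq\delta>0$), the function $\tilde\Phi\in X_{b}(\Omega)$ solves
\[
\divv\Big(\frac{1}{b}\nabla \tilde\Phi\Big)(x)=g(x)\,\Delta\frac{1}{\sqrt{b(x)}}\quad\text{in }\mathcal{D}'(\Omega),\qquad \tilde\Phi\big|_{\partial\Omega}=0,
\]
where $g(x):=\int_{\Omega}G_{\Omega}(x,y)\sqrt{b(y)}f(y)\,dy$ solves the Dirichlet problem $\Delta g=\sqrt{b}f$ in $\Omega$ with $g|_{\partial\Omega}=0$. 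Lemma~\ref{source.Lp} combined with Minkowski's inequality yields the global bound $\|b^{-1/2}\nabla\tilde\Phi\|_{L^{2}(\Omega)}\leq C_{K'}\|f\|_{L^{\infty}(\Omega)}$, which together with the Poincaré inequality and the boundedness of $b$ also gives $\|\tilde\Phi\|_{H^{1}_{0}(\Omega)}\leq C_{K'}\|f\|_{L^{\infty}(\Omega)}$.

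\textbf{Local bootstrap and Schauder estimate.} I would then fix an open set $U$ with $K\Subset U\Subset\Omega$; on $\overline U$ we have $b\in C^{3}(\overline U)$, $b\geq c_{U}>0$, and hence $\Delta(1/\sqrt b)\in C^{1}(\overline U)$, so the equation rewrites as the uniformly elliptic, non-degenerate equation
\[
\Delta\tilde\Phi=b g\,\Delta\frac{1}{\sqrt{b}}+\frac{\nabla b}{b}\cdot\nabla\tilde\Phi\quad\text{in }U.
\]
Because $\sqrt b f\in L^{\infty}(\Omega)$ is supported in $K'$, standard $L^{p}$ regularity for the Dirichlet Laplacian provides $g\in C^{1,\beta}(\overline\Omega)$ for every $\beta<1$, with norm controlled by $\|f\|_{L^{\infty}(\Omega)}$. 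A classical bootstrap then runs inside $U$: starting from $\tilde\Phi\in H^{1}(U)$, the right-hand side belongs to $L^{2}_{\rm loc}(U)$ so $W^{2,2}$ interior regularity gives $\tilde\Phi\in H^{2}_{\rm loc}(U)$; the Sobolev embedding (in dimension~$2$) promotes $\nabla\tilde\Phi$ to $L^{q}_{\rm loc}$ for every $q<\infty$, whence $\tilde\Phi\in W^{2,q}_{\rm loc}$ and $\nabla\tilde\Phi\in C^{\beta}_{\rm loc}$; this makes the right-hand side $C^{\beta}$-continuous, and interior Schauder estimates finally deliver $\tilde\Phi\in C^{2,\beta}_{\rm loc}(U)$. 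Tracking the constants through each step yields the desired bound $\|\tilde\Phi\|_{W^{2,\infty}(K)}\leq C_{K,K'}\|f\|_{L^{\infty}(\Omega)}$.

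\textbf{Main obstacle.} The genuine subtlety is the derivation of the pointwise equation for $\tilde\Phi$: the distribution $\divv_{x}(b^{-1}\nabla_{x}S_{\Omega,b}(\cdot,y))$ depends badly on $y$ near $\partial\Omega$ because of the factor $\Delta(1/\sqrt b)$, so interchanging $y$-integration with the $x$-divergence is not automatic. The hypothesis $\supp f\subset K'\Subset\Omega$ is precisely what resolves this difficulty, confining the $y$-integration to a region on which Lemma~\ref{source.Lp} supplies the uniform integrability needed to apply Fubini. Once this step is carried out rigorously, the remainder is classical interior elliptic theory in the non-degenerate domain $U$.
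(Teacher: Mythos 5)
Your proof is correct and follows essentially the same route as the paper: both derive the interior equation $\divv\big(\frac{1}{b}\nabla\tilde\Phi\big)=g\,\Delta\frac{1}{\sqrt{b}}$ with $g$ solving the Dirichlet problem $\Delta g=\sqrt{b}\,f$, use Lemma~\ref{source.Lp} (uniformly over $y\in K'$, which is where the compact support of $f$ enters) to control $\tilde\Phi$ and justify the manipulations, and conclude by non-degenerate interior elliptic regularity on a neighborhood of $K$. The only difference is mechanical: the paper invokes a one-shot $W^{3,p}$ interior estimate together with the embedding $W^{3,p}\hookrightarrow W^{2,\infty}$ for $p>2$, whereas you run an $H^{2}_{\rm loc}\to W^{2,q}_{\rm loc}\to C^{2,\beta}_{\rm loc}$ bootstrap ending with Schauder estimates.
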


\begin{proof}
Let $\tilde K$ such that $K\Subset \tilde K\Subset \Omega$.
As $b(x)\geq C_{K}>0$ on $\tilde K$, we use \eqref{eq.GreenSb} to get for all $x\in \tilde K$
\[
\divv\Big(\frac1{b(x)}\nabla \tilde\Phi(x)\Big)= \Delta\frac1{\sqrt{b(x)}} \int_{\Omega} G_{\Omega}(x,y) \sqrt{b(y)}f(y)\, dy=:F(x).
\]
On $\tilde K$, the elliptic problem $\divv(b^{-1} \nabla\cdot )$ is non singular, and standard elliptic estimate gives for any $p>2$
\[
 \| \tilde\Phi \|_{W^{2,\infty}(K)} \leq C_{p} \| \tilde\Phi \|_{W^{3,p}(K)} \leq C_{K,p} \Big( \|F\|_{W^{1,p}(\tilde K)} +\| \tilde\Phi \|_{L^{2}(\tilde K)} \Big),
\]
and that $ \tilde\Phi\in C^2(\Omega)$, provided that we prove that $F\in W^{1,p}_{\rm loc}(\Omega)$.
The $L^2$ estimate of $ \tilde\Phi$ comes directly from Lemma~\ref{source.Lp} and the Poincar\'e inequality:
\begin{align*}
 \| \tilde\Phi \|_{L^{2}(\Omega)} &\leq C_{\Omega}\int_{\Omega} \| \nabla_{x} S_{\Omega,b}(\cdot,y) \|_{L^{2}(\Omega)} |f(y)|\, dy\\
 & \leq C_{\Omega,b} \int_{\Omega} \| b^{-1/2} \nabla_{x} S_{\Omega,b}(\cdot,y) \|_{L^{2}(\Omega)} |f(y)|\, dy\leq C_{K'}\|f\|_{L^\infty(\Omega)}.
\end{align*}
Concerning the $W^{1,p}$ estimate of $F$, we use that $b\in C^3$ and $b(x)\geq C_{K}>0$ on $\tilde K$ together with the remark that $\tilde F(x):= \int_{\Omega} G_{\Omega}(x,y) \sqrt{b(y)}f(y)\, dy $ is the solution of the classical Laplace's equation $\Delta \tilde F=\sqrt b f$ with Dirichlet boundary condition, hence veryfying
\[
\Big\| \int_{\Omega} G_{\Omega}(\cdot,y) \sqrt{b(y)}f(y)\, dy\Big\|_{W^{2,p}(\Omega)} \leq C_{p}\|\sqrt b f\|_{L^p(\Omega)}\leq C\|f\|_{L^\infty(\Omega)}.
\]
\end{proof}

The regularity of this remainder allows us to adapt standard result for the Biot-Savart kernel in $\R^2$.

\begin{prop}\label{est.local}
 Let $(\Omega,b)$ verifying \eqref{b-cond} and $K\Subset \Omega$. There exists $C_{K}>0$, and for any $M>0$ there exists $C_{M,K}>0$ such that the following assertions hold.
\medskip

(i) {\bf Local log-Lipschitz regularity}. For a given $f \in L^{\infty}(\Omega)$, the vector field $u=\frac1b\nabla^{\perp}\psi$ where $\psi$ is the solution of \eqref{ellip-eq} is log-Lipschitz on $K$, more precisely
\begin{equation*}
|u(x)-u(y)| \leq C_{K} \|f\|_{L^\infty(\Omega)} |x-y| \Big(1+\big|\ln |x-y| \big|\Big), \quad \forall x,y\in K.
\end{equation*}

\medskip

(ii) {\bf Local $C^1$ regularity}. In addition, if $f \in C^{1}(\Omega)$ such that $\|f\|_{L^\infty(\Omega)}\leq M$, then the vector field $u$ belongs to $C^{1,\beta}(K)$ and \eqref{C1-ln} holds.
\end{prop}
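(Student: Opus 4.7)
The plan is to reduce both statements, via the Green-kernel decomposition \eqref{Green-decomp}, to classical local estimates for the Newtonian potential in the plane. Setting
\[
\tilde F(x):=\int_\Omega G_\Omega(x,y)\sqrt{b(y)}f(y)\,dy,\qquad \tilde\Phi(x):=\int_\Omega S_{\Omega,b}(x,y)f(y)\,dy,
\]
one has $\psi=\sqrt{b}\,\tilde F+\tilde\Phi$ and therefore
\[
u=\frac1b\nabla^\perp\psi=\frac1{\sqrt b}\nabla^\perp\tilde F+\frac{\tilde F}{b}\nabla^\perp\sqrt b+\frac1b\nabla^\perp\tilde\Phi.
\]
On any compact $K\Subset\Omega$ the topography $b$ is smooth and bounded below by a positive constant, so every prefactor above belongs to $C^2(K)$, and the regularity of $u$ on $K$ is dictated by that of $\tilde F$, $\nabla\tilde F$, $\tilde\Phi$ and $\nabla\tilde\Phi$.

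The contribution of $\tilde\Phi$ is dealt with once and for all by Lemma~\ref{S.reg}: choosing $\tilde K\Subset\Omega$ with $K\Subset\tilde K$ and using it as the $K'$ of that lemma yields $\|\tilde\Phi\|_{W^{2,\infty}(K)}\leq C_K\|f\|_{L^\infty(\Omega)}$, so that $b^{-1}\nabla^\perp\tilde\Phi$ is Lipschitz on $K$ and contributes only linearly in $\|f\|_{L^\infty}$. The singular part of $u$ is therefore carried by $\tilde F$, which solves the plain Poisson equation $\Delta\tilde F=\sqrt{b}f$ in $\Omega$ with Dirichlet boundary condition. Writing $G_\Omega(x,y)=G_{\R^2}(x-y)+h(x,y)$ with $h$ harmonic in $x\in\Omega$, the corrector $h$ is $C^\infty$ on a neighbourhood of $K\times K$, so that only the convolution of $\sqrt{b}f$ against the planar Laplace kernel can generate singularities on $K$.

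For part~(i), one applies to the $L^\infty$ source $\sqrt b f$ the classical local log-Lipschitz estimate for the gradient of the Newtonian potential, namely
\[
|\nabla\tilde F(x)-\nabla\tilde F(y)|\leq C_K\|f\|_{L^\infty(\Omega)}|x-y|(1+|\ln|x-y||)\quad\forall x,y\in K,
\]
which, together with the Lipschitz control of $\tilde\Phi$, gives the announced log-Lipschitz regularity of $u$. For part~(ii), the hypotheses $f\in C^1(\Omega)$ and $\|f\|_{L^\infty}\leq M$ translate into $\|\nabla(\sqrt b f)\|_{L^\infty(\tilde K)}\leq C_K(M+\|\nabla f\|_{L^\infty(\Omega)})$. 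A classical H\"older estimate for Newtonian potentials of $C^1$ sources yields $u\in C^{1,\beta}(K)$ for every $\beta<1$, while the Kato-type logarithmic second-derivative estimate
\[
\|\nabla^2\tilde F\|_{L^\infty(K)}\leq C_K\|\sqrt b f\|_{L^\infty(\Omega)}\ln\bigl(2+\|\nabla(\sqrt b f)\|_{L^\infty(\tilde K)}\bigr)
\]
delivers \eqref{C1-ln} once one differentiates the formula for $u$. The main technical point is precisely this endpoint logarithmic inequality: a naive Calder\'on--Zygmund bound would blow up as $p\to\infty$, in the flavor of \eqref{ell-reg}, and recovering the logarithm requires the usual splitting of the singular convolution into a near-diagonal piece controlled by $\|\nabla f\|_{L^\infty}\,\delta|\ln\delta|$ and a far-field piece controlled by $\|f\|_{L^\infty}|\ln\delta|$, then optimizing in $\delta$.
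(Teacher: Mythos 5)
Your overall strategy --- reducing, via the decomposition \eqref{Green-decomp}, to classical planar estimates for the Newtonian potential (the log-Lipschitz gradient bound and the Kato-type $\ln(2+\|\nabla g\|_{L^\infty})$ second-derivative bound, proved by the usual near/far splitting and optimization in $\delta$) --- is in the same spirit as the paper's proof, which after composing with the Riemann map invokes exactly those two classical facts. But there is a genuine gap at your very first step: you apply the splitting $\psi=\sqrt b\,\tilde F+\tilde\Phi$ with $\tilde\Phi(x)=\int_\Omega S_{\Omega,b}(x,y)f(y)\,dy$ to an arbitrary $f\in L^\infty(\Omega)$, and then bound $\tilde\Phi$ by Lemma~\ref{S.reg} with $\tilde K$ playing the role of $K'$. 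Lemma~\ref{S.reg} requires $f$ to be \emph{compactly supported in} $K'$; your $f$ is supported in all of $\Omega$, so the lemma does not apply. The obstruction is not cosmetic: the only available control on the remainder kernel is Lemma~\ref{source.Lp}, whose constant $C_\delta$ is uniform only over $y$ with $\dist(y,\partial\Omega)\geq\delta$ and may degenerate as $y\to\partial\Omega$ (recall that $\Delta b^{-1/2}$ fails to be integrable near the vanishing shore), so for a general $f\in L^\infty(\Omega)$ the function $\tilde\Phi$ is not even known to be well defined, let alone to satisfy $\|\tilde\Phi\|_{W^{2,\infty}(K)}\leq C_K\|f\|_{L^\infty}$. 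For the same reason, the representation $\psi=\int_\Omega G_{\Omega,b}(\cdot,y)f(y)\,dy$ with the decomposition \eqref{Green-decomp} is only justified in the paper for compactly supported densities, so your identity $\psi=\sqrt b\,\tilde F+\tilde\Phi$ is itself unproved in the generality you use it.

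The fix is precisely the localization the paper performs before touching the Green kernel: pick $K\Subset K'\Subset\Omega$ and $\chi\in C^\infty_c(\Omega)$ with $\chi=1$ on $K'$, split $u=u_{\rm int}+u_{\rm ext}$ with sources $f\chi$ and $f(1-\chi)$, and treat $u_{\rm ext}$ --- which is $b$-harmonic on $K'$ --- by interior elliptic regularity combined with the energy bound \eqref{energy}, which yields $\|\nabla u_{\rm ext}\|_{L^\infty(K)}\leq C_K\|f\|_{L^\infty(\Omega)}$ and disposes of all contributions from $y$ near the boundary at the cost of a Lipschitz term. Only then is the decomposition \eqref{Green-decomp} applied, to the compactly supported density $f\chi$, where Lemma~\ref{S.reg} is legitimate and the singular term reduces, through the Riemann map, to $\nabla^\perp\ln|\cdot|\ast g$ with $g$ bounded and compactly supported in $B(0,1)$; at that point your two classical potential estimates apply verbatim and give (i) and (ii). With this cutoff inserted your argument goes through and essentially coincides with the paper's; without it, the remainder term $\tilde\Phi$ is uncontrolled.
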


\begin{proof}
By local elliptic regularity \cite{ag-do-ni1, ag-do-ni2}, it is clear that $f\in C^{1}(\Omega)$ implies that $u\in C^{1,\beta}(K)$.

We fixed $K'$ such that $K\Subset K'\Subset \Omega$, $\chi\in C^\infty_{c}(\Omega)$ such that $\chi=1$ on $K'$, and we consider $u_{\rm int}$ and $u_{\rm ext}$ associated to $f_{\rm int}=f \chi$ resp. $f_{\rm ext}=f (1-\chi)$. By linearity, we have $u=u_{\rm int}+u_{\rm ext}$.

The vector field $u_{\rm ext}$ is $b$-harmonic on $K'$, so by local elliptic regularity, we have
\[
\| \nabla u_{\rm ext} \|_{L^\infty(K)} \leq C_{K} \| u_{\rm ext} \|_{L^2(K')} \leq C_{K} \|f\|_{L^\infty(\Omega)}
\]
where we have used \eqref{energy}. Points {\rm (i)} and {\rm (ii)} are then obvious for this part.

For the other part, we use the decomposition of the Green kernel \eqref{Green-decomp}:
\begin{multline*}
 u_{\rm int}(x)= \frac1{b(x)} \int_{\Omega} \nabla^\perp_{x} \Big( G_{\Omega}(x,y)\sqrt{b(x)} \Big)\sqrt{b(y)} f(y)\chi(y)\, dy \\
 +\frac1{b(x)}\int_{\Omega} \nabla_{x}^\perp S_{\Omega,b}(x,y) f(y)\chi(y)\, dy.
\end{multline*}
By Lemma~\ref{S.reg}, the last integral on the right-hand side term belongs to $C^1$ and its $W^{1,\infty}(K)$ norm is bounded by $C_{K}\|f\|_{L^\infty}$. Points {\rm (i)} and {\rm (ii)} are then also clear for this term. In the first integral, we recall that there is a part which depends on $|\mathcal{T}(x)-\mathcal{T}(y)^*|$ which gives $C^1$ contribution because $|\mathcal{T}(x)-\mathcal{T}(y)^*|\geq C_{K}$ for every $(x,y)\in K\times \Omega$. Hence, the only singular term is
\begin{multline*}
 \frac1{b(x)} \int_{\Omega} \nabla^\perp_{x} \Big( \ln |\mathcal{T}(x)-\mathcal{T}(y)| \sqrt{b(x)} \Big)\sqrt{b(y)} f(y)\chi(y)\, dy\\
=\frac1{b(x)} \int_{\R^2} \nabla^\perp_{x} \Big(\ln|\mathcal{T}(x)-\xi | \sqrt{b(x)} \Big) g(\xi)\, d\xi
\end{multline*}
with
\[
g(\xi):= (\sqrt{b}f\chi)(\mathcal{T}^{-1}(\xi)) |\det D\mathcal{T}^{-1}(\xi)|
\]
a bounded function compactly supported in $B(0,1)$. The most singular term is a composition of $\nabla^\perp \ln|x-\cdot|\ast g$ by $\mathcal{T}$ regular, times some powers of $\sqrt{b(x)}$ which are regular on $K$. The usual regularity for the Green kernel in $\R^2$ ends the proof: we refer for instance to \cite[App. 2.3]{MarPul} for the log-Lipschitz regularity and to \cite[Lem. 7.2]{ADL} for the $C^1$ estimate in terms of $\ln \| g\|_{C^1}$.
\end{proof}

\subsection{Elliptic regularity up to the boundary}

To continue the proof of Theorem~\ref{ellip-est}, we have now to focus on high regularity up to the boundary.

First, let us notice that the question of the elliptic regularity up to the boundary for equation \eqref{ellip-eq} 
has been already addressed in \cite{br-me-lake}, and so the estimate \eqref{ell-reg} is mainly a consequence of the results proved in that paper. We recall that the authors in \cite[Theo. 2.3]{br-me-lake} showed
that $u$ is continuous (in $C^\beta$ for any $\beta\in [0,1)$), belongs to any $W^{1,p}$ and we have
\begin{gather*}
 \|u \|_{L^\infty} \leq C_{0}\big(\|f\|_{L^\infty} + \| bu \|_{L^2} \big)\\
\frac1p \|\nabla u \|_{L^p} \leq C_{0}\big(\|f\|_{L^p} + \| bu \|_{L^2} \big)\quad \forall p\in [p_{0},\infty)\quad p_0>2,
\end{gather*}
where $C_{0}$ depends only on $\Omega,b,p_{0}$. Using that the domain is bounded and \eqref{energy}, it is clear that \eqref{ell-reg} is an obvious consequence of \cite[Theo. 2.3]{br-me-lake}.

By Proposition~\ref{est.local}, to complete the proof of Theorem~\ref{ellip-est}, we only need to show that $u$ is log-Lipschitz in a neighborhood of the boundary with an estimate as in \eqref{log-lip} and that $u$ is $C^{1,\beta}$ up to the boundary if $f\in C^1$. 
For such a regularity close to the boundary, the decomposition \eqref{Green-decomp} is not convenient because of the singular behavior of $\Delta b^{-1/2}$ in \eqref{eq.GreenSb}. A possible approach to follow is the one introduced by {\sc E. B. Fabes} and his coauthors in \cite{Fa-Je-Se, Fa-ke-Se1} where they were interested in a similar problem. Actually, in a part of their results, the authors studied the behavior of the Green function associated to \eqref{ellip-eq} when the coefficient $1/b(x)$ is replaced by $h(x)$ and the function $h(x)$ is in the class $\mathcal{A}_2$ of Muckenhoupt weights. Following their works, one could prove the existence of a Green function of \eqref{ellip-eq} and derive estimates for its size. However, this will lead us to assume a restrictive condition on our function $b$. For instance, if $b$ is assumed to be as $\dist(x, \del \Omega)^\alpha$, then this will lead to the following assumption, $0<\alpha<1$. For this reason we choose to follow the analysis of {\sc Bresch} and {\sc M\'etivier} performed in \cite{br-me-lake}.

Before starting the proof, we introduce some notations in $\mathbb{\R}^2_+=\{ (x_{\tau},x_{n})\in \R\times \R_{+}^*\}$. For $\beta\in (0,1)$, we denote by $C^{\beta}(\overline{\mathbb{\R}^2_+})$ the space of bounded functions on $\mathbb{\R}^2_+$ which are uniformly H\"older continuous with exponent $\beta$. Moreover, we denote by $C^\beta_1(\overline{\mathbb{\R}^2_+})$ the space of functions $u\in C^\beta(\overline{\mathbb{\R}^2_+})$ such that $x_n u\in C^{\beta+1}(\overline{\mathbb{\R}^2_+})$. Similarly, we denote by $W^{k,p}_1(\Omega)$ the space of functions $u$ in $W^{k,p}(\Omega)$ such that $x_n u\in W^{k+1,p}(\Omega)$. In this section, we consider $b=\varphi^\alpha$ with $\varphi(x)=c(x)\dist(x,\partial\Omega)$ where $c(x)\geq c_{0}>0$.

The analysis performed in \cite{br-me-lake} is based on rewriting \eqref{ellip-eq} in a different form with the use of a new variable $\Phi=\varphi^{-(\alpha+1)}\psi$. We can easily check that $\Phi$ satisfies the following equation
\begin{equation}\label{eq-phi}
 \varphi \Delta \Phi +(\alpha+2) \nabla \varphi \cdot \nabla \Phi +(\alpha+1) \Delta \varphi\, \Phi =f.
\end{equation}
As we are interested in establishing estimates near the boundary, then we proceed by using local coordinates. The boundary $\del \Omega$ is assumed to be a closed smooth manifold of class $C^3$. Consider a coordinate patch $x_\tau \rightarrow \gamma(x_\tau)$ from an open interval $w \subset \mathbb{R}$ to $\del \Omega$, with $\gamma\in C^3$ on $\overline{w}$. Taking $\nu(x_\tau)$ to be the inward unit normal to $\del \Omega$ at $\gamma(x_\tau)$, 
we parametrize a neighbourhood $V$ of $\gamma(w)$ by $(x_\tau, x_n)\in w \times (-\delta,\delta)$ considering the mapping
\begin{align*}
\Gamma: \; w \times (-\delta,\delta) \; &\longrightarrow V\\
(x_\tau, x_n)\; &\longmapsto \gamma(x_\tau)+x_n \nu(x_\tau).
\end{align*}
Due to the $C^3$ regularity of $\del\Omega,$ $\Gamma$ is a $C^3$ diffeomorphism, which is chosen to preserves the normal direction. We write $x=(x_1,x_2)=\Gamma(x^*)$ and $x^*=(x_\tau, x_n)=\Gamma^{-1}(x)=J(x).$ Set
\begin{align*}
&\tilde{\Phi}(x^*)=\Phi(\Gamma(x^*))\quad \mbox { for}\quad x^*\in w \times (-\delta,\delta), \\
&\tilde{\Phi}(J(x))=\Phi(x)\;\qquad \mbox {for}\quad x\in V.
\end{align*}
For $x^*\in w \times [0,\delta)$, it is not difficult to show that $\tilde{\varphi}(x^*)=\varphi(\Gamma(x^*))=x_n\tilde c(x^*)$ with $\tilde c\in C^3 (w \times [0,\delta))$ such that $\tilde c\geq \tilde c_{0}>0$. Moreover, we use the Chain Rule formula to compute
\begin{align*}
\dfrac{\del \Phi}{\del x_j}= \dfrac{\del \tilde{\Phi}}{\del x_\tau}\dfrac{\del J_1}{\del x_j}+ \dfrac{\del \tilde{\Phi}}{\del x_n} \dfrac{\del J_2}{\del x_j},
\end{align*}
and consequently, we deduce
\begin{align*}
\dfrac{\del^2 \Phi}{\del x_j^2}=\dfrac{\del \tilde{\Phi}}{\del x_\tau}\dfrac{\del^2 J_1}{\del x_j^2}+ \dfrac{\del \tilde{\Phi}}{\del x_n} \dfrac{\del^2 J_2}{\del x_j^2}+\dfrac{\del^2 \tilde\Phi}{\del x_\tau^2} \bigg| \dfrac{\del J_1}{\del x_j}\bigg|^2 + \dfrac{\del^2 \tilde\Phi}{\del x_n^2} \bigg| \dfrac{\del J_2}{\del x_j}\bigg|^2+2\dfrac{\del^2 \tilde\Phi}{\del x_\tau \del x_n} \dfrac{\del J_1}{\del x_j}\dfrac{\del J_2}{\del x_j}.
\end{align*}
Thus, Equation~\eqref{eq-phi} in these new coordinates becomes (omitting the tilde for simplicity)
\begin{equation}\label{Eq-Phi}
 \mathcal{L}(\Phi)=x_n c(x) \sum_{j,k=1}^2 b_{jk}(x)\del_j \del_k \Phi+\sum_{j=1}^2 b_{j}(x)\del_j \Phi+b_0 (x)\Phi=f,
\end{equation}
where we denote by
\begin{align*}
 & b_{jk}=\sum_{i=1}^2 \del_{x_i} J_j \del_{x_i}J_k\\
 & b_j= x_n c c_j +(\alpha+2)d_j \qquad c_j:= \sum_{i=1}^2\del^2_{x_i}J_j \qquad d_j:= \sum_{i=1}^2 \del_{x_i}\varphi b_{ji}\\
 & b_0=(\alpha+1)\bigg[\sum_{j,k=1}^2 b_{jk}(x) \del_j \del_k \varphi +\sum_{j=1}^2 c_j\del_j \varphi\bigg].
\end{align*}
One can check that the following properties hold:
\begin{itemize}
\item the coefficients $b_{jk}$ are real and we have $b_{jk}=b_{kj}$;
\item there exists a constant $C$ such that for all $x\in w \times (0,\delta)$ and $\xi \in \mathbb{R}^2,$ we have
\[
c(x) \sum_{j,k=1}^2 b_{jk}(x)\xi_j \xi_k \geq C |\xi|^2;
\]
\item for $j=1,2$, we have $b_{22}b_j -b_2 b_{2j}=\mathcal{O}(x_n)$;
\item we have $b_2>0$, for $x_n=0$. 
\end{itemize}
Equation \eqref{Eq-Phi} is a particular case of degenerate elliptic equations studied in \cite{br-me-lake,gou-shima}. The main results shown in these papers are the H\"older and $W^{2,p}$ estimates of solution $\Phi$ for a given function $f$ in $L^p$.
From \cite{br-me-lake}, we recall that if $\psi \in H^1_0(\Omega)$ and $f\in L^{p}(\Omega)$ with $p>2,$ then $\Phi=\varphi^{-(\alpha+1)}\psi$ satisfies the following estimate on H\"older spaces for $\beta=1-\frac2p$
\begin{equation}\label{Holder-est}
\| \Phi \|_{C^{\beta}(\overline{w_1}\times [0, \delta])}+\|x_n \Phi \|_{C^{\beta+1}(\overline{w_1}\times [0, \delta])}\leq C_\beta \big( \| f \|_{L^p(\Omega)}+ \| \psi \|_{H^1(\Omega)}\big),
\end{equation}
for all relatively compact subsets $w_1\subset w,$ as well the $L^p-$estimates
\begin{equation}\label{Lp-est}
\| \del_j \Phi \|_{L^{p}(\R^2_+)}+\| x_n \del_j \del_k \Phi \|_{L^{p}(\R^2_+)} \leq C p \big( \| f \|_{L^p(\Omega)}+ \| \psi \|_{H^1(\Omega)}\big).
\end{equation}
As $u= \varphi^{-\alpha}\nabla^\perp \psi= \varphi \nabla^\perp \Phi +(\alpha+1) \Phi \nabla^\perp \varphi$, these estimates give the $C^\beta$ and $W^{1,p}$ regularity for $u$, where we recall \eqref{energy} and that $\varphi$ behaves as $x_{n}$ by the straightening of the boundary. The estimate \eqref{Holder-est} is deduced from the results established in \cite{BCM} for a more general equation, verifying the properties listed above, see Sections 4-5 in \cite{br-me-lake} for more details. Notice that if we consider $f\in C^{1}(\overline{\Omega})$, then the right-hand side term in \eqref{Eq-Phi} becomes in $C^{1}(\R^2_+)$, particularly, in $C^{\beta}(\R^2_+)$ for any $\beta< 1$. Theorem~1 in \cite{gou-shima} implies that
\begin{equation*}
 \Phi \in C^{\beta+1}(\R^2_+)\qquad x_n \Phi \in C^{\beta+2}(\R^2_+),
\end{equation*}
and this proves that $ u\in C^{1,\beta}(\R^2_+),$ for any $\beta<1$. This fact together with Proposition~\ref{est.local} finishes the proof of (ii) in Theorem~\ref{ellip-est}.
\begin{rmk}
 The regularity $C^1$ for $u$ developed in the previous argument, totally based on \cite{gou-shima}, only gives $\|\nabla u\|_{L^\infty (\Omega)} \leq C \| \omega\|_{W^{1,\infty}(\Omega)}$. For the fixed point procedure in Section~\ref{sec-pointfixed}, it is crucial to have an estimate of $\|\nabla u\|_{L^\infty}$ by $F(\|\nabla \omega\|_{L^\infty})$ where $F$ is sublinear. Replacing $C \| \nabla f\|_{L^\infty(\Omega)}$ by $\ln(2+\|\nabla f\|_{L^\infty})$ was exactly the purpose of Proposition~\ref{est.local}.
\end{rmk}
In \cite{br-me-lake}, the proof of estimate \eqref{Lp-est} was more technical and it required a careful analysis of kernel estimates of the two operators $\mathbb{E}$ and $\mathbb{K}$ given below. As these kernel estimates constitute the main ingredients of our proof of the log-Lipschitz estimate, we detail this part in the sequel. The reader is referred to Section 6 in \cite{br-me-lake}, where all these estimates in full details were shown. 

Before stating and proving the desired log-Lipschitz estimate, we recall here a known result about operators with singular kernels.
\begin{prop}\label{prop-MarPul}
 Suppose that kernel $K(x,y)$ satisfies on $\R^2_{+}\times\R^2_{+}$:
 \[
 |K(x,y)|\leq \frac C{|x-y|}, \quad |\partial_{x} K(x,y)|\leq \frac C{|x-y|^2}.
 \]
 Then, the operator
 \[
 Tf(x) = \int_{\R^2_{+}} K(x,y) f(y)\, dy
 \] 
 acts from $L^\infty_{c}(\R^2_{+})$ to ${\rm LogLip}(\R^2_+)$, where
\[
 \| F \|_{{\rm LogLip}(\R^2_+)} :=\|F\|_{L^\infty(\R^2_{+})}+ \sup_{x\neq y \in \R^2_+} \frac{|F(x)-F(y)|}{|x-y| (1+|\ln |x-y||) }.
\]
\end{prop}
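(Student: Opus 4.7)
The plan is to follow the classical singular-integral splitting argument that underlies the Marchioro--Pulvirenti estimate cited in the proof of Proposition~\ref{est.local}. I would first extract the $L^\infty$ bound and then estimate the log-Lipschitz modulus by isolating a small region around the diagonal.

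For the uniform bound, fix $f\in L^\infty_c(\R^2_+)$ with $\supp f\subset B(0,R)\cap \R^2_+$. Splitting the integral defining $Tf(x)$ into $\{z\in\supp f:\ |x-z|\leq 1\}$ and its complement, the first piece is bounded by $C\|f\|_\infty \int_{|x-z|\leq 1}|x-z|^{-1}\,dz=2\pi C\|f\|_\infty$, and the second by $C\|f\|_\infty\,|\supp f|$. This yields $\|Tf\|_{L^\infty}\leq C_R\|f\|_\infty$ uniformly in $x\in\R^2_+$.

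For the log-Lipschitz seminorm, fix $x\neq y$ in $\R^2_+$ and set $r=|x-y|$. Large separations are trivial: if $r\geq 1/2$, the $L^\infty$ bound already gives $|Tf(x)-Tf(y)|\leq 2\|Tf\|_\infty\leq C_R\|f\|_\infty\, r(1+|\ln r|)$. Assume $r<1/2$ and decompose $\R^2_+=A\cup B$ with $A=\bigl(B(x,2r)\cup B(y,2r)\bigr)\cap \R^2_+$. On $A$ I would apply the first hypothesis directly and control
\[
\int_A \bigl(|K(x,z)|+|K(y,z)|\bigr)\,dz\leq C\int_{|x-z|\leq 2r}\frac{dz}{|x-z|}+C\int_{|y-z|\leq 2r}\frac{dz}{|y-z|}\leq Cr.
\]
On $B$, every $\xi$ on the segment $[x,y]$ satisfies $|\xi-z|\geq |x-z|-r\geq |x-z|/2$ because $|x-z|\geq 2r$. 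By the mean value theorem and the second hypothesis, $|K(x,z)-K(y,z)|\leq Cr/|x-z|^2$. Integrating over $B\cap\supp f\subset\{2r\leq |x-z|\leq C_R\}$ in polar coordinates yields $Cr\ln(C_R/r)\leq C_R r(1+|\ln r|)$. Summing the two contributions gives the claimed estimate.

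The argument is purely technical; there is no substantive obstacle, since the two hypotheses on $K$ are tailored precisely to balance the near-diagonal and far-diagonal contributions. The only minor point to watch is that we are in the half-plane rather than the full plane, but the hypotheses are upper bounds on $K$ and the integration domain is merely restricted to $\R^2_+$, so all estimates go through unchanged.
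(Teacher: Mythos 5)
Your proof is correct and follows essentially the same route as the paper: a near/far splitting around the diagonal, with the pointwise kernel bound controlling the balls of radius comparable to $|x-y|$ (contributing $O(r)$) and the gradient bound controlling the far region (contributing the $r|\ln r|$ term), the compact support of $f$ supplying the outer cutoff. The only cosmetic difference is that the paper centers the splitting at the midpoint $\tilde x=(x+x')/2$ and states the difference bound $|K(x,y)-K(x',y)|\leq C|x-x'|/|\tilde x-y|^2$ directly, whereas you run the mean value theorem along the segment $[x,y]$ (legitimate since $\R^2_+$ is convex and the segment stays at distance $\geq |x-z|/2$ from $z$), which is the same estimate in different clothing.
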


\begin{proof}
We follow \cite[App. 2.3]{MarPul}. Indeed, the first estimate for $K$ implies that $T$ maps $L^\infty_{c}(\R^2_{+})$ to $L^\infty(\R^2_{+})$:
 \[ |Tf(x)| \leq C\int_{B(x,1)} \frac{|f(y)| }{|x-y|} \, dy + C\int_{B(x,1)^c} \frac{|f(y)| }{|x-y|} \, dy\leq C(\|f\|_{L^\infty}+\|f\|_{L^1}).
 \]
 
It is then enough to show the log-Lipschitz estimate for $x,x'\in \R^2_{+}$ such that $|x-x'|\leq1/2$. We set $\tilde x=(x+x')/2$ and $\delta=|x-x'|$, the second estimate for $K$ implies that
 \[
 |K(x,y)-K(x',y)|\leq C\frac{|x-x'|}{|\tilde x-y|^2} \quad \text{for all $y$ such that}\quad |\tilde x-y|\geq 2|x-x'|.
 \]
 We write 
\begin{align*}
 |Tf(x)-Tf(x')| \leq& C \int_{\R^2_{+}\setminus B(\tilde x, 2\delta)} \frac{|x-x'|}{|\tilde x-y|^2} |f(y) |\, dy + 
 C \int_{B(x, 3\delta)} \frac{|f(y)| }{| x-y|} \, dy \\
 &+ C \int_{B(x', 3\delta)} \frac{|f(y)| }{|x'-y|} \, dy \\
 \leq& C\delta \|f\|_{L^1(B(\tilde x,1)^c)} + C\delta \|f\|_{L^\infty(B(\tilde x,1))} |\ln(2\delta)| + C\delta \|f\|_{L^\infty}
\end{align*}
 which ends the proof of the proposition.
\end{proof}
We recall that the goal of this section is to add the following log-Lipschitz estimate:
\begin{equation}\label{LogLip-est}
\| \Phi \|_{{\rm LogLip}(\R^2_+)}+\| x_n \del_j \Phi \|_{{\rm LogLip}(\R^2_+)} \leq C \big( \| f \|_{L^\infty(\Omega)}+ \| \psi \|_{H^1(\Omega)}\big),
\end{equation}
from which, along with Proposition~\ref{est.local}, we can deduce \eqref{log-lip} in Theorem~\ref{ellip-est}.

In what follows, we shall prove estimate \eqref{LogLip-est} following the same lines performed in \cite{br-me-lake} to prove estimate \eqref{Lp-est}. Given $w_1,w_{2}$ such that $w_1\Subset w_2\Subset w$ and $\delta'\in (0,\delta)$, by estimate \eqref{Holder-est}, $\Phi$ is of class $C^\beta_1$ on $w_2 \times [0,\delta]$ for any $\beta\leq 1-2/p.$ Consider $\chi\in C_c^\infty(w_2\times[0,\delta))$ such that $\chi=1$ on $w_1\times[0,\delta']$. Let $\phi=\chi \Phi\in C^\beta_1(\R^2_+).$ Then we have
\begin{equation*}
\mathcal{L}\phi=g:=\chi f+[\mathcal{L}, \chi]\Phi.
\end{equation*}
We will use several times that $[\mathcal{L}, \chi]$ is of the form $x_n \sum_{j} A_{j}(x)\del_j +A_0 (x)$, where $A_0(\cdot)$ and $A_j(\cdot)$ are bounded continuous functions.
Since $\Phi \in C^\beta_1$, then it is obvious that $g \in L^{p}(\R^2_+)$ for any $p\leq \infty$. Next, by \eqref{Holder-est}, choosing $p_{0}> 2$ and $\mu_{0}=1-2/p_{0}$, the following estimates hold for any $p>p_{0}$ and $f\in L^p$:
\begin{equation*}
\begin{split}
 \| \phi \|_{C^{\mu_{0}}_{1}(\overline{\mathbb{R}^2_+})} \leq C \big( \| f \|_{L^{p_{0}}(\Omega)}+ \| \psi \|_{H^1(\Omega)}\big),\\
 \| g \|_{L^{p}(\mathbb{\R}^2_+)} \leq C \big( \| f \|_{L^{p}(\Omega)}+ \| \Phi \|_{C^{\mu_{0}}_{1}(\overline{\mathbb{R}^2_+})}\big) .
\end{split}
\end{equation*}
Estimate \eqref{Lp-est} is the consequence of \cite[Theo. 6.1]{br-me-lake} which states that
\begin{equation}\label{CZ}
 \| \del_j \phi \|_{L^{p}(\R^2_+)}+\| x_n \del_j \del_k \phi \|_{L^{p}(\R^2_+)} \leq C \big(p \| g\|_{L^p(\R^2_+)}+ \| \phi \|_{C^{\mu_{0}}_{1}(\overline{\R^2_+})}\big)
\end{equation}
for some $C$ independent of $p$. In the same way, we say that \eqref{LogLip-est} is an obvious corollary of the following theorem.
\begin{theo}\label{theo-BM-log}
 Suppose that $\phi$ has compact support in $w\times[0,\delta)$ and $\mathcal{L}\phi=g\in L^{\infty}(\R^2_+)$. Then $\phi$ is log-Lipschitz and there is $C$ such that for all such $\phi$
\begin{equation}\label{loc-log-lip}
\| \phi \|_{{\rm LogLip}(\R^2_+)}+\| x_n \del_j \phi \|_{{\rm LogLip}(\R^2_+)} \leq C \big( \| g \|_{L^\infty(\Omega)}+ \| \phi \|_{C^{\mu_{0}}_{1}(\overline{\R^2_+})}\big).
\end{equation} 
\end{theo}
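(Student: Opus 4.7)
The plan is to carry out the argument of \cite[Sec.~6]{br-me-lake} used to derive the $L^p$ bound \eqref{CZ}, replacing the Calder\'on--Zygmund endpoint by the log-Lipschitz one encoded in Proposition~\ref{prop-MarPul}. The linear growth in $p$ of the constant in \eqref{CZ} already signals that $g\mapsto \phi$ and $g\mapsto x_n\del_j\phi$ act as singular integrals of Calder\'on--Zygmund type, whose natural endpoint on $L^\infty$ is precisely the log-Lipschitz class, exactly as for the Biot--Savart kernel of $\R^2$ that produces the Yudovich log-Lipschitz estimate.

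Concretely, after freezing of coefficients and localisation (permitted since $\phi$ is compactly supported in $w\times[0,\delta)$), I would invoke the parametrix of \cite[Sec.~6]{br-me-lake} associated with the model degenerate operator $x_n\Delta+(\alpha+2)\del_n$ to write
\[
\phi(x)=\int_{\R^2_+} E(x,y)\, g(y)\, dy+R_0(x),\qquad (x_n\del_j\phi)(x)=\int_{\R^2_+} K_j(x,y)\, g(y)\, dy+R_j(x),
\]
where the remainders $R_0, R_j$ arise from commutators with the lower-order terms $b_j\del_j$ and $b_0$ of $\mathcal{L}$ and are controlled in the log-Lipschitz norm by $\|\phi\|_{C^{\mu_0}_1(\overline{\R^2_+})}$ (using \eqref{Holder-est} on the smoother outputs of those lower-order contributions). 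One then needs to check that the kernels $E$ and $K_j$ obey the pointwise estimates
\[
|E(x,y)|+|K_j(x,y)| \leq \frac{C}{|x-y|},\qquad |\nabla_x E(x,y)|+|\nabla_x K_j(x,y)| \leq \frac{C}{|x-y|^2},
\]
uniformly on $\R^2_+\times\R^2_+$. Proposition~\ref{prop-MarPul} then upgrades the boundedness of the singular integrals against $g\in L^\infty$ to the log-Lipschitz class, and combining with the remainder estimates yields \eqref{loc-log-lip}.

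The hard part will be, as in \cite{br-me-lake}, the verification of the pointwise kernel bounds near the degeneracy locus $\{x_n=0\}$, where the factor $x_n$ in front of the principal part produces an anisotropy that prevents a direct application of standard Calder\'on--Zygmund theory. Fortunately, the two pointwise bounds above are precisely the bounds that \cite[Sec.~6]{br-me-lake} already extracts from an explicit analysis of the fundamental solution of $x_n\Delta+(\alpha+2)\del_n$ and its variable-coefficient perturbations, and they are the bounds driving \eqref{CZ} via the Calder\'on--Zygmund theorem. The novelty in the present statement is therefore limited to the observation that these same pointwise kernel bounds feed Proposition~\ref{prop-MarPul} and hence automatically provide the $L^\infty\to{\rm LogLip}$ endpoint, in complete parallel with the way $L^p$-bounds on the standard Biot--Savart kernel produce the Yudovich log-Lipschitz estimate.
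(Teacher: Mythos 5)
Your treatment of the singular part is sound and coincides with the paper's: the pointwise bounds you list for $E$ and for $K_j=x_n\del_{x_j}E$ are exactly those of Lemma~\ref{lem-E}, and feeding them into Proposition~\ref{prop-MarPul} is precisely how the paper obtains Lemma~\ref{lem-Ch}, namely $\| \chi \mathbb{E} g \|_{{\rm LogLip}(\R^2_+)}+\| x_n \chi\del_j \mathbb{E} g \|_{{\rm LogLip}(\R^2_+)} \leq C \| g \|_{L^\infty(\R^2_+)}$. The genuine gap is in your remainder estimate. You claim that $R_0,R_j$ are ``controlled in the log-Lipschitz norm by $\|\phi\|_{C^{\mu_0}_1(\overline{\R^2_+})}$ using \eqref{Holder-est}'', but \eqref{Holder-est} only delivers H\"older control of class $C^\beta_1$ with $\beta<1$, and $C^\beta(\overline{\R^2_+})$ does \emph{not} embed into ${\rm LogLip}(\R^2_+)$: as $r\to 0$ one has $r(1+|\ln r|)=o(r^\beta)$, so the log-Lipschitz modulus is strictly stronger than any H\"older modulus of exponent $\beta<1$. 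Worse, for $R_j$ the H\"older information is a full derivative short: $\Phi\in C^\beta_1$ only yields $x_n\del_j\Phi\in C^\beta$. A secondary inaccuracy: the error $\mathbb{K}g=\mathcal{L}\mathbb{E}g-g$ does not arise only from commutators with the lower-order terms $b_j\del_j$ and $b_0$; freezing the variable coefficients of the \emph{principal} part at $y$ contributes as well, which is why the kernel of $\mathbb{K}$ is itself singular, $|K(x,y)|\leq C|x-y|^{-1}$, so its output is not directly a ``smoother'' term one can absorb by inspection.

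What the paper does instead, and what your argument is missing, is to process the remainder through the equation. Setting $\Psi=\chi\mathbb{E}g$, one computes $\mathcal{L}(\Psi-\phi)=h:=\chi\mathbb{K}g+[\mathcal{L},\chi]\bar\chi\mathbb{E}g$, and the mapping properties $\mathbb{K}:L^p\to C^\beta(\overline{\R^2_+})$ and $\mathbb{E}:L^p\to W^{1,p}_{1,\rm loc}(\overline{\R^2_+})$ give $h\in C^\beta(\overline{\R^2_+})$ with the bounds \eqref{esti-h}. One then invokes the degenerate Schauder estimate of Goulaouic and Shimakura \cite[Theo.~1]{gou-shima} for $\mathcal{L}$, which upgrades $h\in C^\beta$ to $\Psi-\phi\in C^{\beta+1}(\overline{\R^2_+})$ and $x_n(\Psi-\phi)\in C^{\beta+2}(\overline{\R^2_+})$ — genuinely Lipschitz control on both $\Psi-\phi$ and $x_n\del_j(\Psi-\phi)$, which does dominate the ${\rm LogLip}$ norm; combining with Lemma~\ref{lem-Ch} yields \eqref{loc-log-lip}. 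So your plan is repairable, but the repair requires this extra $C^\beta\to C^{\beta+1}$ Schauder ingredient for the degenerate operator; as written, the step bounding $R_0$ and $R_j$ in ${\rm LogLip}$ via \eqref{Holder-est} would fail.
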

The main idea in the proof of \eqref{CZ}, and also of \eqref{loc-log-lip}, is to compare the operator $\mathcal{L}(\cdot)$ to a model operator, say, $\mathcal{L}_{y_0}(\cdot)$ for which we know its explicit solution. Indeed, for any $y_{0}\in \overline{w}\times (0,\delta)$, we denote by $\mathcal{L}_{y_{0}}$ the operator 
\[
\mathcal{L}_{y_{0}} (x,\partial_{x}) := x_n \frac{c(y_{0})}{d_{2}(y_{0})} \sum_{j,k=1}^2 b_{jk}(y_{0})\del_j \del_k + (\alpha+2) \partial_{2} + \frac{b_{1}(y_{0})}{d_{2}(y_{0})} \del_1 .
\]
By a linear transformation, we can replace this operator by (see Section 6.2 in \cite{br-me-lake} for details) 
\[
\tilde{\mathcal{L}}=\tilde x_n \Delta_{\tilde x} + (\alpha+2) \partial_{\tilde x_2} ,
\] 
for which we introduce the fundamental solution $\tilde E$ and a regularization $\tilde E^\varepsilon$ for $\varepsilon\in (0,1)$. From these kernels, we derive the fundamental solution and the $\varepsilon$ approximation for $\mathcal{L}_{y_{0}}$: 
\begin{gather*}
 E_{y_{0}}(x,y):=|\det T'(y_{0}) | E(T(y_{0})x,T(y_{0})y),\\ 
 E^\varepsilon_{y_{0}}(x,y):=|\det T'(y_{0}) | E^\varepsilon(T(y_{0})x,T(y_{0})y).
 \end{gather*}
 Finally, one defines the parametrices
 \[
 E(x,y):=E_{y}(x,y) \quad \text{and}\quad E^\varepsilon(x,y):=E^\varepsilon_{y}(x,y).
 \] 
 On the next step, the authors decomposed $\mathcal{L} E^\varepsilon(x,y)$ as $G^\varepsilon(x,y)+K^\varepsilon(x,y)$ where the more regular part $K^\varepsilon$ is estimated in Lemma 6.6 whereas the explicit part $G^\varepsilon$ is studied in Lemma 6.7. For convenience, we state in the following lemma some results obtained in \cite{br-me-lake}, and we refer to Section 6 therein for the proof.

\begin{lem}\label{lem-E}
For $k \in \mathbb{N}$, there is a constant $C_k$ such that for
all $x,y \in w \times \mathbb{R}^+ $, $x \neq y$, there holds
\begin{equation*}
 |\nabla^k E(x,y)|\leq \dfrac{C_k}{|x-y|^{k+1}}, \qquad |x_n \nabla^{k+1} E(x,y)| \leq \dfrac{C_k}{|x-y|^{k+1}}.
\end{equation*}
Moreover, if we denote by $\mathbb{E}$ the operator with kernel $E$, then $\mathbb{E}$ maps from $L^p(w\times (0,\delta))$ to $W^{1,p}_{1,\rm loc}(\overline{\R^2_+})$, and, moreover, the following identity holds
\begin{equation*}
\mathcal{L}\mathbb{E}g -g =\mathbb{K}g\in C^\beta_{1,{\rm loc}}(\overline{\R^2_+}),
\end{equation*}
where the kernel of $\mathbb{K}$ satisfies the following properties
\begin{equation*}
 |K(x,y)|\leq \dfrac{C}{|x-y|} \qquad |\del_x K(x,y)| \leq \dfrac{C}{|x-y|^2},
\end{equation*}
for all $(x, y) \in \mathbb{R}^+ \times w$, $x \neq y$.
\end{lem}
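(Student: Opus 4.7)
The plan is to build all assertions from the explicit fundamental solution $\tilde E$ of the model operator $\tilde{\mathcal L}=\tilde x_{n}\Delta_{\tilde x}+(\alpha+2)\del_{\tilde x_{2}}$, and then propagate the resulting estimates through the linear change of variables $T(y_{0})$ and the freezing $y_{0}=y$.

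\textbf{Step 1: pointwise bounds on $E$.} I would construct $\tilde E$ by partial Fourier transform in the tangential variable $\tilde x_{1}$, which reduces the problem to an ODE in $\tilde x_{n}$ with spectral parameter, solvable in closed form (Bessel / confluent hypergeometric). From this representation one reads off $|\tilde E(\tilde x,\tilde y)|\leq C/|\tilde x-\tilde y|$ near $\tilde x=\tilde y$, and the natural scaling $(\tilde x_{1},\tilde x_{n})\mapsto(\lambda\tilde x_{1},\lambda\tilde x_{n})$—under which $\tilde{\mathcal L}$ is homogeneous of weight $-1$—gives $|\nabla^{k}\tilde E|\leq C_{k}|\tilde x-\tilde y|^{-(k+1)}$. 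The weighted bound $|\tilde x_{n}\nabla^{k+1}\tilde E|\leq C_{k}|\tilde x-\tilde y|^{-(k+1)}$ reflects the fact that one extra $x_{n}$-derivative gains no additional singularity once compensated by the $\tilde x_{n}$ prefactor of the degenerate direction. These estimates transfer to $E_{y_{0}}$ via $T(y_{0})$: as $y_{0}$ varies over $\overline{w}\times[0,\delta)$ the matrix $T(y_{0})$ and its inverse are uniformly bounded (frozen coefficients are continuous on a compact set and $c\geq c_{0}>0$, $b_{2}|_{x_{n}=0}>0$). Setting $y_{0}=y$ proves the pointwise bounds for $E(x,y)=E_{y}(x,y)$, the derivatives acting only on $x$.

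\textbf{Step 2: parametrix identity and kernel bounds for $\mathbb K$.} By construction, $\mathcal L_{y}E_{y}(\cdot,y)=\delta(\cdot-y)$ in $\mathcal D'_{x}$, so the splitting $\mathcal L_{x}=\mathcal L_{y}+(\mathcal L_{x}-\mathcal L_{y})$ yields
\[
\mathcal L_{x}E(x,y)=\delta(x-y)+K(x,y),\qquad K(x,y):=(\mathcal L_{x}-\mathcal L_{y})E_{y}(x,y).
\]
Pairing with a test function and invoking dominated convergence (the bounds of Step~1 justify commuting $\mathcal L$ with the $y$-integral) produces the identity $\mathcal L\mathbb E g-g=\mathbb K g$. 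The normalization by $d_{2}(y_{0})$ in the definition of $\mathcal L_{y_{0}}$ is exactly what ensures that, at $x=y$, the principal symbols of $\mathcal L_{x}$ and $\mathcal L_{y}$ coincide; hence the second-order coefficients of $\mathcal L_{x}-\mathcal L_{y}$ equal $x_{n}\bigl[c(x)b_{jk}(x)-\tfrac{c(y)}{d_{2}(y)}b_{jk}(y)\bigr]=O(|x-y|)$ as prefactors of $\del_{j}\del_{k}E_{y}$, the first-order coefficients likewise carry an $O(|x-y|)$ factor, and $b_{0}$ is merely bounded. Combining these with $|x_{n}\nabla^{2}E_{y}|\leq C|x-y|^{-2}$, $|\nabla E_{y}|\leq C|x-y|^{-2}$, $|E_{y}|\leq C|x-y|^{-1}$ gives $|K(x,y)|\leq C/|x-y|$. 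Differentiating $K$ once in $x$ produces either a bounded derivative of a coefficient factor times $x_{n}\nabla^{2}E_{y}\leq C|x-y|^{-2}$, or an $O(|x-y|)$ coefficient factor times $\nabla^{3}E_{y}\leq C|x-y|^{-3}$; in both cases one obtains $|\del_{x}K(x,y)|\leq C|x-y|^{-2}$.

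\textbf{Step 3: mapping property of $\mathbb E$ and H\"older regularity of $\mathbb K g$.} Since $|E(x,y)|\leq C/|x-y|$ is locally integrable in $\R^{2}$, $\mathbb E g$ is locally bounded for $g\in L^{p}(w\times(0,\delta))$, $p>1$. The kernels of $\nabla_{x}\mathbb E$ and $x_{n}\nabla_{x}^{2}\mathbb E$ obey the size bound $\leq C|x-y|^{-2}$ and the gradient bound $\leq C|x-y|^{-3}$ coming from the $k=2,3$ cases of Step~1, so they are Calder\'on--Zygmund kernels and define bounded operators on $L^{p}$ for every $1<p<\infty$; this gives $\mathbb E g\in W^{1,p}_{1,\mathrm{loc}}(\overline{\R^{2}_{+}})$. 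For $\mathbb K g$, the bounds on $K$ fall exactly into the hypotheses of Proposition~\ref{prop-MarPul}, yielding log-Lipschitz regularity (in particular $C^{\beta}$). The weighted H\"older statement $\mathbb K g\in C^{\beta}_{1,\mathrm{loc}}$ is then obtained by revisiting the factored form of $K$: the $O(|x-y|)$ prefactor of $\nabla^{2}E_{y}$ absorbs one $x_{n}$-weighted derivative, so $x_{n}\mathbb K g$ has a kernel one order less singular and inherits an extra order of regularity by the same proposition.

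\textbf{Main obstacle.} The delicate point is Step~2: one must verify that the principal-part coefficient differences of $\mathcal L_{x}-\mathcal L_{y}$, in the precise normalization prescribed by the definition of $\mathcal L_{y_{0}}$ (division by $d_{2}(y_{0})$, the fixed constant $\alpha+2$), genuinely vanish to first order in $|x-y|$ rather than to zeroth order. Any loss in this cancellation would leave $K$ with a $|x-y|^{-2}$ singularity—as strong as $\nabla\mathbb E$ itself—and the whole parametrix scheme would collapse. A secondary but real technical point is the uniformity of the bounds on $\tilde E$ up to the degenerate boundary $\tilde x_{n}=0$, which cannot be deduced from standard interior elliptic theory and demands careful tracking of the boundary behavior in the explicit Fourier representation.
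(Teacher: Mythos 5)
Your plan reproduces, in its essentials, the argument the paper relies on: the paper does not prove this lemma itself, but states it as a summary of Section 6 of \cite{br-me-lake}, and the scheme there is exactly yours --- freeze the coefficients at $y_{0}$, reduce $\mathcal{L}_{y_{0}}$ by the linear transformation $T(y_{0})$ to the model operator $\tilde{\mathcal L}=\tilde x_{n}\Delta_{\tilde x}+(\alpha+2)\partial_{\tilde x_{2}}$, use its explicit fundamental solution, set $E(x,y)=E_{y}(x,y)$, and show the remainder kernel gains one order of singularity through the cancellation of the frozen-coefficient differences. Your identification of the ``main obstacle'' (that the normalization by $d_{2}(y_{0})$ makes the principal and relevant first-order parts match) is precisely the point for which the paper lists the structural conditions $b_{22}b_{j}-b_{2}b_{2j}=\mathcal{O}(x_{n})$ and $b_{2}>0$ at $x_{n}=0$.

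That said, two of your steps contain genuine gaps as written, both repairable. First, in Step 2 the first-order coefficient differences are \emph{not} $\mathcal{O}(|x-y|)$: the $\partial_{2}$-coefficient of $\mathcal{L}$ is $b_{2}=x_{n}c\,c_{2}+(\alpha+2)d_{2}$, so freezing leaves a residual term of size $\mathcal{O}(x_{n})$ with no smallness in $|x-y|$; it is controlled only through the weighted bound $|x_{n}\nabla E_{y}|\leq C|x-y|^{-1}$ --- which is exactly why the lemma records the weighted estimates --- so your stated mechanism (small coefficient times $|\nabla E_{y}|\leq C|x-y|^{-2}$) would fail for this term even though the needed bound is in your hands. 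Relatedly, {\sc Bresch} and {\sc M\'etivier} do not commute $\mathcal{L}$ with the $y$-integral by dominated convergence: since the frozen parameter in $E_{y}(x,y)$ is the integration variable itself, they work with the regularized parametrix $E^{\varepsilon}$ and decompose $\mathcal{L}E^{\varepsilon}=G^{\varepsilon}+K^{\varepsilon}$, recovering the identity $\mathcal{L}\mathbb{E}g-g=\mathbb{K}g$ in the limit $\varepsilon\to0$; your distributional shortcut needs this (or an equivalent regularization) to be rigorous. Second, in Step 3, size and gradient bounds on the kernels of $\nabla_{x}\mathbb{E}$ and $x_{n}\nabla_{x}^{2}\mathbb{E}$ do not by themselves yield $L^{p}$-boundedness --- the kernel $|x-y|^{-2}$ in the plane satisfies both and is unbounded on every $L^{p}$ --- one needs the cancellation coming from these kernels being derivatives of a degree-$(-1)$ homogeneous fundamental solution, which is the actual content of the Calder\'on--Zygmund theorem (Theorem 6.1) of \cite{br-me-lake}, whence also the factor $p$ in the estimate \eqref{CZ} quoted in the paper.
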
 

Based on the estimates furnished on the kernel $E$, we have the following lemma.
\begin{lem}\label{lem-Ch}
The operator $\mathbb{E}$ maps from $L^\infty(w\times (0,\delta))$ to ${\rm LogLip}(\R^2_+)$. For all relatively compact open set $w_1\subset w$, all $\delta^\prime<\delta$ and for all $\chi \in C_c^\infty(w\times [0,\delta))$ such that $\chi=1$ on a neighborhood of $\bar{w}_1\times [0,\delta'],$ there is a constant $C$ such that for all $g\in L^\infty$ supported in $w_1 \times (0,\delta^\prime)$ there holds
\begin{equation}\label{Eg-est}
\| \chi \mathbb{E} g \|_{{\rm LogLip}(\R^2_+)}+\| x_n \chi\del_j \mathbb{E} g \|_{{\rm LogLip}(\R^2_+)} \leq C \| g \|_{L^\infty(\R^2_+)}.
\end{equation}
\end{lem}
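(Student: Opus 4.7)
The strategy is to apply Proposition~\ref{prop-MarPul} to two kernels built from $E$, namely $E(x,y)$ itself for the first term and $H(x,y):=x_n\partial_{x_j}E(x,y)$ for the second, and then to use the cutoff $\chi$ to extend the resulting log-Lipschitz bounds from a neighborhood of $\mathrm{supp}\,\chi$ to all of $\R^2_+$.

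For $\mathbb{E}g$, Lemma~\ref{lem-E} with $k=0$ gives $|E(x,y)|\le C/|x-y|$, and with $k=1$ gives $|\nabla_x E(x,y)|\le C/|x-y|^2$. These are precisely the hypotheses of Proposition~\ref{prop-MarPul}. The only subtlety is that they are only known on $(w\times\R^+)^2$ rather than on $\R^2_+\times\R^2_+$. Since $g$ is supported in $w_1\times(0,\delta')\Subset w\times[0,\delta)$, since $\chi\mathbb{E}g$ vanishes outside $\mathrm{supp}\,\chi\Subset w\times[0,\delta)$, and since the proof of Proposition~\ref{prop-MarPul} is purely local in the evaluation variable $x$, it goes through to yield
\[
\|\mathbb{E}g\|_{L^\infty(\widetilde w\times[0,\widetilde\delta])}+\sup_{x\ne x'\in\widetilde w\times[0,\widetilde\delta]}\frac{|\mathbb{E}g(x)-\mathbb{E}g(x')|}{|x-x'|(1+|\ln|x-x'||)}\le C\|g\|_{L^\infty},
\]
for some $\widetilde w\Supset\mathrm{supp}\,\chi$ and $\widetilde\delta<\delta$. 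Multiplying by $\chi\in C_c^\infty(w\times[0,\delta))$, which is Lipschitz and bounded, and using that the product of a bounded Lipschitz function with a bounded log-Lipschitz function is log-Lipschitz, the estimate extends to all of $\R^2_+$.

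For $x_n\chi\partial_j\mathbb{E}g$, I differentiate under the integral as justified in the proof of \eqref{CZ} to write $\partial_j\mathbb{E}g(x)=\int\partial_{x_j}E(x,y)g(y)\,dy$, so that the kernel of the operator $g\mapsto x_n\partial_j\mathbb{E}g$ is $H(x,y)=x_n\partial_{x_j}E(x,y)$. Lemma~\ref{lem-E} with $k=0$ gives $|H(x,y)|\le C/|x-y|$; for the $x$-derivative, one computes
\[
\partial_{x_k}H(x,y)=\delta_{k2}\,\partial_{x_j}E(x,y)+x_n\,\partial_{x_k}\partial_{x_j}E(x,y),
\]
and each term is bounded by $C/|x-y|^2$ using $k=1$ in Lemma~\ref{lem-E} (the first term directly, the second via the $x_n\nabla^{k+1}E$ estimate). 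Proposition~\ref{prop-MarPul} applies to $H$, and the same cutoff argument as above yields the required log-Lipschitz bound on all of $\R^2_+$.

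The main obstacle is not conceptual but a matter of bookkeeping: the kernel estimates of Lemma~\ref{lem-E} only hold on $(w\times\R^+)^2$, so Proposition~\ref{prop-MarPul} cannot be invoked verbatim on $\R^2_+\times\R^2_+$. The role of the cutoff $\chi$, together with the compact support of $g$ well inside $w\times[0,\delta)$, is precisely to confine all evaluation points to the region where the kernel bounds are valid and to allow a painless extension of the estimate to all of $\R^2_+$.
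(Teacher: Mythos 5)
Your proposal is correct and follows essentially the same route as the paper, which proves Lemma~\ref{lem-Ch} precisely by applying Proposition~\ref{prop-MarPul} to the kernels $E(x,y)$ and $x_n\partial_{x_j}E(x,y)$ via the bounds of Lemma~\ref{lem-E}. Your explicit verification of the hypotheses for $x_n\partial_{x_j}E$ and your cutoff bookkeeping for the restriction of the kernel estimates to $w\times\R^+$ simply spell out details the paper leaves implicit.
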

Due to Lemma~\ref{lem-E}, Lemma~\ref{lem-Ch} becomes as a consequence of Proposition~\ref{prop-MarPul} stated above.

Now, we move to prove Theorem~\ref{theo-BM-log}.
\begin{proof}[Proof of Theorem~\ref{theo-BM-log}]
 We take $\chi$ and $\bar{\chi}$ in $C^\infty_c(w\times [0,\delta))$ such that $\chi=1$ on a neighborhood of the support of $\phi$ and $\bar{\chi}=1$ of the support of $\chi$. By assumption $\phi$ and $g=\mathcal{L}\phi$ have compact support in $w\times [0,\delta).$ Let $\Psi=\chi \mathbb{E} g$. We compute
\[
\mathcal{L}\Psi=\chi \mathcal{L}\mathbb{E}g +[\mathcal{L}, \chi]\mathbb{E}g=g+\chi \mathbb{K}g+[\mathcal{L}, \chi]\bar{\chi}\mathbb{E}g.
\]
This implies
\begin{equation*}
\mathcal{L}(\Psi-\phi)=\chi \mathbb{K}g+[\mathcal{L}, \chi]\bar{\chi}\mathbb{E}g.
\end{equation*}
{\sc Bresch} and {\sc M\'etivier} claimed that the right-hand side term in the above equation, say, $h$, is in $C^\beta(\overline{\R^2_+})$. This is true since the operator $\mathbb{E}$ maps from $L^p(\omega \times (0,\delta))$ to $W^{1,p}_{1,\rm{loc}}(\overline{\R^2_+})$, and the operator $\mathbb{K}$ with kernel $K$ acts from $L^p(\omega \times (0,\delta))$ to $C^\beta(\overline{\mathbb{R}}^2_+)$ for all $\beta\leq 1-2/p.$ Accordingly, the following estimates hold
\begin{equation}\label{esti-h}
\begin{split}
& \| \Psi \|_{C^\beta_1(\overline{\mathbb{R}^2_+})}\leq C\| \mathbb{E}g \|_{W^{1,p}_{1,\rm{loc}}(\overline{\R^2_+})} \leq C \|g \|_{L^p(\omega \times (0,\delta))},\\
&\| [\mathcal{L}, \chi]\bar{\chi}\mathbb{E}g\|_{ C^{\beta}(\overline{\R^2_+})}\leq C \|g \|_{L^p(\omega\times(0,\delta))},\\
&\|\chi \mathbb{K}g\|_{ C^{\beta}(\overline{\R^2_+})}\leq C \|g \|_{L^p(\omega\times(0,\delta))},
\end{split}
\end{equation}
for any $\beta\leq 1-2/p$. On the other hand, one could apply the result presented in \cite[Theo. 1]{gou-shima} to deduce that if $h\in C^\beta(\overline{\R^2_+}),$ then we have
\begin{equation*}
\Psi-\phi \in C^{\beta+1}(\overline{\R^2_+})\qquad x_n(\Psi-\phi) \in C^{\beta+2}(\overline{\R^2_+}),
\end{equation*}
and the following estimate holds
\[
\| \Psi-\phi \|_{{\rm LogLip}(\R^2_+)}+\| x_n \del_j (\Psi-\phi) \|_{{\rm LogLip}(\R^2_+)} \leq C \big( \| h\|_{C^{\beta}(\overline{\R^2_+})} +\| \Psi-\phi \|_{C^{\beta}_1(\overline{\R^2_+})}\big).
\]
This estimate coupled with the log-Lipschitz estimate on $\Psi=\chi \mathbb{E}g$ \eqref{Eg-est}, as well with \eqref{esti-h}, finish the proof of Theorem~\ref{theo-BM-log}. 
\end{proof}

\section{Classical solutions for the inviscid lake equations: proof of Theorem~\ref{Th-WP}}\label{sec-pointfixed}

The main idea is to introduce the characteristic curve along the flow and to use an iteration procedure based on the well-posedness of the linear transport equation. We refer to {\sc Marchioro} and {\sc Pulvirenti} \cite{MarPul} for this type of construction and to \cite{ADL} (proof of Theorem 2.2 in Section 7.1) where all the details are included in the context of $C^1-$solutions.

Without any loss of generality, we will assume that ${\rm diam} (\Omega)<1$ which will simplify the use of the log-Lipschitz estimate: 
\[
1+|\ln |x-y||\leq -C_{\Omega} \ln|x-y|\quad \forall x,y\in \Omega.
\]

We fix $(\Omega,b)$ verifying \eqref{b-cond} and an initial data $\omega_{0}\in C^1_{c} (\Omega)$. We denote $C_{\rm lip}:=C\|\omega_{0}\|_{L^\infty}$ where $C$ is the constant appearing in \eqref{log-lip} of Theorem~\ref{ellip-est}, and we define
\[
\Omega_{T} = \Big\{x\in \Omega\ |\ \dist(x,\partial\Omega) \geq \dist(\supp \omega_{0},\partial\Omega)^{\exp(C_{\rm lip}T)} \Big\}.
\]
For any $T >0$, we introduce now the subspace $C_{\omega_0,T}\subset C^1([0,T]\times \Omega)$ as follows: a function $\omega \in C^1([0,T]\times \Omega)$ belongs to $C_{\omega_0,T}$ if and only if:
\begin{itemize}
\item $\|\omega\|_{L^\infty(\Omega)}=\|\omega_0\|_{L^\infty(\Omega)}$, for every $t\in [0,T]$,
\item $\omega(0,x)=\omega_0(x)$, for every $x\in \Omega$,
\item $\supp \omega(t,\cdot) \subset \Omega_{T}$, for every $t\in [0,T]$.
\end{itemize}
Of course, the subspace $C_{\omega_0,T}$ inherits its topology from the metric of $C^1\left([0,T]\times\Omega\right)$.

We define the trajectories $X$ starting from any $x\in \supp \omega_{0}$: the curve solving the differential equation
\[
\frac{d X(t,x)}{d t}=u(t,X(t,x)), \quad X(0,x)=x.
\]

In the sequel of the section, we will use several times the elliptic estimates furnished in Theorem~\ref{ellip-est} to get estimates for the velocity $u=\frac{1}{b}\nabla^\perp \psi$, associated to $\omega$ through the elliptic problem \eqref{ellip-eq} with $f=b\omega$. These estimates are collected in the following lemma.

\begin{lem}\label{lem-iteration}
There exists a constant $C_{0,T}$ which depends only on $\omega_{0}$, $T$ and on the geometry of the lake $(\Omega,b)$ such that for any $\omega\in C_{\omega_0,T}$, we have:
\begin{itemize}
\item the velocity $u=\frac{1}{b}\nabla^\perp \psi$ associated to $\omega$ through the elliptic problem \eqref{ellip-eq} with $f=b\omega$ satisfies 
\[
\| u(t,\cdot)\|_{L^{\infty}(\Omega)} \leq C_{0,T}
\]
and 
\[\| \nabla u(t,\cdot)\|_{L^{\infty}(\Omega_{T})}\leq C_{0,T} \ln \Big(2+\|\nabla \omega(t,\cdot)\|_{L^\infty(\Omega)}\Big)\quad \text{for all $t\in [0,T]$};
\]
\item the unique characteristic curve $X(\cdot,x)\in C^1([0,T] ; \Omega)$ associated to $u$ defines for any $t\in [0,T]$ a $C^1$ diffeomorphism from $\supp \omega_{0}$ onto its image;
\item the function $\tilde \omega$ defined as
\begin{equation*}
\left\{
\begin{array}{ll}
 \tilde\omega(t,x) = \omega_{0}( X(t,\cdot)^{-1}(x)) , & \text{if } x\in X(t,\supp \omega_{0}),\\
 \tilde\omega(t,x) =0,& \text{otherwise,} 
\end{array}\right.
\end{equation*}
belongs to $C_{\omega_{0},T}$.
\end{itemize}
\end{lem}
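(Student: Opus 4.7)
The plan is to apply Theorem~\ref{ellip-est} to the velocity associated to any $\omega\in C_{\omega_0,T}$, and to combine the resulting regularity with Osgood-type uniqueness for the characteristic ODE.

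For the first bullet, the identity $\|\omega(t,\cdot)\|_{L^\infty}=\|\omega_0\|_{L^\infty}$ gives $\|f\|_{L^\infty}\leq M:=\|b\|_{L^\infty}\|\omega_0\|_{L^\infty}$ with $f:=b\omega$, so \eqref{ell-reg} yields the uniform bound $\|u(t,\cdot)\|_{L^\infty(\Omega)}\leq CM$. Writing $\|\nabla f\|_{L^\infty}\leq \|\nabla b\|_{L^\infty}\|\omega_0\|_{L^\infty}+\|b\|_{L^\infty}\|\nabla\omega(t,\cdot)\|_{L^\infty}$ and using the elementary inequality $\ln(2+a+r)\leq \ln(2+a)+\ln(2+r)$, the local bound \eqref{C1-ln} applied with $K=\Omega_T$ then gives the logarithmic estimate on $\|\nabla u(t,\cdot)\|_{L^\infty(\Omega_T)}$, after absorbing $\ln\bigl(2+\|\nabla b\|_{L^\infty}\|\omega_0\|_{L^\infty}\bigr)$ and $\ln(2+\|b\|_{L^\infty})$ into the constant $C_{0,T}$.

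For the second bullet, the global log-Lipschitz estimate \eqref{log-lip}, with constant $C_{\rm lip}=C\|\omega_0\|_{L^\infty}$, makes the ODE $\dot X=u(t,X)$ uniquely solvable by Osgood's criterion. Since $\psi=0$ on $\partial\Omega$, the gradient $\nabla\psi$ is normal there, so $u=b^{-1}\nabla^\perp\psi$ is tangential and trajectories remain in $\overline\Omega$. The step I expect to be the main obstacle is the quantitative control of the distance to the boundary: for $x\in\supp\omega_0$, setting $\rho(t):=\dist(X(t,x),\partial\Omega)$ and choosing a nearest point $y\in\partial\Omega$, the relation $u(y)\cdot n(y)=0$ together with \eqref{log-lip} gives, wherever $\dist(\cdot,\partial\Omega)$ is differentiable,
\[
|\rho'(t)|=\bigl|(u(X(t,x))-u(y))\cdot n(y)\bigr|\leq C_{\rm lip}\,\rho(t)\bigl(1-\ln\rho(t)\bigr),
\]
valid as soon as $\rho(t)<1$. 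Integrating this Osgood-type inequality via the substitution $\mu(t):=1-\ln\rho(t)$ yields $\rho(t)\geq \rho(0)^{\exp(C_{\rm lip} t)}$, up to a harmless relabeling of $C_{\rm lip}$; this is exactly the definition of $\Omega_T$, hence $X(t,\supp\omega_0)\subset \Omega_T$ for every $t\in[0,T]$. The global $C^{1,\beta}(\overline\Omega)$ regularity of $u$ given by \eqref{C1alpha} then ensures that $X(t,\cdot)$ is $C^1$ in $x$, injectivity follows from backward Osgood uniqueness, and the Jacobian does not vanish by Liouville's formula, so $X(t,\cdot)$ is a $C^1$ diffeomorphism from $\supp\omega_0$ onto its image.

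The third bullet is a direct composition argument. Since $\omega_0\in C^1_c(\Omega)$ and $X(t,\cdot)^{-1}$ is $C^1$ on $X(t,\supp\omega_0)$, the function $\tilde\omega$ is $C^1$ on $X(t,\supp\omega_0)$ and vanishes on the boundary of this open set, so its extension by $0$ belongs to $C^1([0,T]\times\Omega)$; its time-support lies in $\Omega_T$ by the previous step. The identity $\tilde\omega(t,X(t,x))=\omega_0(x)$ immediately gives $\|\tilde\omega(t,\cdot)\|_{L^\infty}=\|\omega_0\|_{L^\infty}$ and $\tilde\omega(0,\cdot)=\omega_0$, so the three defining properties of $C_{\omega_0,T}$ are all satisfied.
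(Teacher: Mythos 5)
Your proposal is correct, and for the first and third bullets it matches the paper's proof, which simply invokes Theorem~\ref{ellip-est} for the velocity bounds and defines $\tilde\omega$ by transport along the flow. The genuine difference is in the no-collision estimate. The paper uses the tangency of $u$ to define trajectories of boundary points, $X(\cdot,y)\in C^1([0,T];\partial\Omega)$ for $y\in\partial\Omega$, and applies Gr\"onwall to $t\mapsto-\ln|X(t,x)-X(t,y)|$, where the standing normalization ${\rm diam}(\Omega)<1$ converts $1+\bigl|\ln r\bigr|$ into $-C_\Omega\ln r$; taking the infimum over $y\in\partial\Omega$ then gives $\dist(X(t,x),\partial\Omega)\geq\dist(x,\partial\Omega)^{\exp(C_{\rm lip}t)}$ with exactly the exponent defining $\Omega_T$. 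You instead differentiate $\rho(t)=\dist(X(t,x),\partial\Omega)$ through the nearest-point projection and use tangency only pointwise at the foot point. This is a legitimate alternative: it spares you the construction of the boundary flow (and the implicit fact, needed in the paper's version, that every boundary point at time $t$ lies on a trajectory issued from $\partial\Omega$, so that the infimum over boundary trajectories really computes the distance), at the cost of a.e.-differentiability bookkeeping for $\rho$ --- harmless here, since $\rho$ is Lipschitz in $t$ and the inequality is only needed where $\rho$ is small, inside the tubular neighborhood of the $C^3$ boundary where $\dist(\cdot,\partial\Omega)$ is smooth and $\nabla\dist=n(y)$. Two small points deserve attention. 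First, your substitution $\mu=1-\ln\rho$ yields $\rho(t)\geq e^{1-\exp(C_{\rm lip}t)}\rho(0)^{\exp(C_{\rm lip}t)}$, which is strictly weaker than $\rho(0)^{\exp(C_{\rm lip}t)}$; since $\Omega_T$ is fixed in the lemma with that precise exponent, the cleaner route is to run your Osgood computation on $-\ln\rho$ directly, bounding $\rho(1+|\ln\rho|)\leq -C_\Omega\,\rho\ln\rho$ by the same normalization, after which the relabeling you wave at is already contained in the constant $C_{\rm lip}$ as the paper uses it. Second, in the first bullet your estimate $\|\nabla f\|_{L^\infty}\leq\|\nabla b\|_{L^\infty}\|\omega_0\|_{L^\infty}+\|b\|_{L^\infty}\|\nabla\omega\|_{L^\infty}$ needs a domain restriction: when $0<\alpha<1$ in \eqref{b-cond}, $\nabla b\sim\varphi^{\alpha-1}\nabla\varphi$ is unbounded near $\partial\Omega$, so you must read $\|\nabla b\|_{L^\infty(\Omega_T)}$, which is finite and legitimate precisely because $\supp\omega(t,\cdot)\subset\Omega_T$ for $\omega\in C_{\omega_0,T}$; the same support condition is what makes the zero-extension of $f=b\omega$ belong to $C^1(\overline{\Omega})$, as \eqref{C1-ln} and \eqref{C1alpha} require. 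Neither point is a gap in substance, and the rest of your argument (Osgood uniqueness, $C^1$ dependence on initial data via \eqref{C1alpha}, the composition argument for $\tilde\omega$ together with the fact that $\nabla\omega_0$ vanishes on $\partial\supp\omega_0$ so the extension by zero remains $C^1$) is sound and at the same level of detail as the paper's own write-up.
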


\begin{proof} For $\omega\in C_{\omega_0,T}$, we consider $u$ the unique solution of \eqref{ellip-eq} with $f=b\omega$. By Theorem~\ref{ellip-est}, we conclude the $C^1-$estimate of $u$ and we define uniquely the trajectory from any $x\in \supp \omega_{0}$: $X(\cdot,x)\in C^1([0,T_{x}] ; \Omega)$ where $T_{x}\leq T$ is such that $X(\cdot,x)\in \Omega$ for any $t\in [0,T_{x}]$. We can choose $T_{x}=T$ unless $X(\cdot,x)$ reaches the boundary in finite time. By the log-Lipschitz and the tangency boundary condition, this collision could never happen: indeed, the velocity being log-Lipschitz up to the boundary and tangent to the boundary, we can also define the trajectories from $y\in \partial\Omega$: $X(\cdot,y)\in C^1([0,T] ; \partial\Omega)$ and we compute
\begin{align*}
 \frac{d}{dt}-\ln |X(t,x)-X(t,y)| &= -\frac{(X(t,x)-X(t,y))\cdot (u(t,X(t,x))-u(t,X(t,y)))}{ |X(t,x)-X(t,y)|^2}\\
 &\leq C_{\rm lip}(-|\ln |X(t,x)-X(t,y)| |),
\end{align*}
hence by Gr\"onwall's lemma we have 
\[
\dist(X(t,x),\partial\Omega) \geq \dist(x,\partial\Omega)^{\exp(C_{\rm lip}T)}, \forall x\in [0,T_{x}].
\]
This allows us to take $T_{x}=T$. By the regularity of $u$, we state that the characteristics define a $C^1$ diffeomorphism which allows to define $\tilde \omega$. This function belongs obviously to $C_{\omega_0,T}$ by definition.
\end{proof}

This lemma allows us to perform the usual iteration procedure scheme, that we present in the sequel of this section.

\subsection{Construction of an approximating sequence}

First, we build an approximating sequence $(\omega_{n} )_{n\in \N}$ using a standard iteration procedure based on the well-posedness of the linear transport equation.

The first term is simply given by the constant-in-time function $\omega_0(t,x)=\omega_0(x)$, for all $(t,x)\in [0,T]\times \Omega$. Then, for each $\omega_n\in C_{\omega_0,T}$, the following term $\omega_{n+1}$ is defined as the unique solution to the linear transport equation
\[
\left\{
\begin{aligned}
& \partial_{t} \omega_{n+1} + u_n\cdot \nabla \omega_{n+1} =0,\\
& \omega_{n+1}(0,\cdot) = \omega_0,
\end{aligned}
\right.
\]
where the velocity flow $u_{n}$ is given as the unique solution of
\[
\curl u_{n}=b\omega_{n} \text{ in }\Omega,\quad \divv (b u_{n})=0 \text{ in }\Omega, \quad (bu_{n})\cdot n=0 \text{ on }\partial\Omega.
\]
By Lemma~\ref{lem-iteration}, $\omega_{n+1}$ is well defined though the characteristics $X_{n}$ associated to $u_{n}$ and belongs to $C_{\omega_{0},T}$.

\subsection{Uniform boundedness in $C^1$}

Second, we establish uniform $C^1$-bound on this approximating sequence. To this end, we observe that the $\omega_n$'s also solve (in the sense of distributions) the following equation, for $i=1,2$:
\[
\partial_{t} \partial_{x_i}\omega_{n+1} + u_n\cdot \nabla \partial_{x_i}\omega_{n+1} =-\partial_{x_i}u_n\cdot \nabla \omega_{n+1}.
\]
It follows that, for any $[a,b]\subset [0,T]$,
\[
\partial_{x_i}\omega_{n+1}(b,X_n(b,x)) = \partial_{x_i}\omega_{n+1}(a,X_n(a,x)) -\int_a^b \partial_{x_i}u_n\cdot \nabla \omega_{n+1}(s,X_n(s,x))ds,
\]
hence, we obtain by Gr\"onwall's lemma
\begin{align*}
|\nabla\omega_{n+1}(b,X_n(b,x))| & \leq |\nabla\omega_{n+1}(a,X_n(a,x))| e^{\int_a^b |\nabla u_n(s,X_n(s,x))| ds},\\
|\partial_t\omega_{n+1}(b,X_n(b,x))| & \leq |\nabla\omega_{n+1}(a,X_n(a,x))| |u_n(b,X_n(b,x))| e^{\int_a^b |\nabla u_n(s,X_n(s,x))| ds},
\end{align*}
for all $x\in\supp \omega_0$. By the $C^1$ estimate of $u_{n}$ included in Lemma~\ref{lem-iteration}, we conclude that 
\begin{align*}
 \|\omega_{n+1}\|_{C^1([a,b]\times \Omega)} 
\leq& \| \omega_{0} \|_{ L^\infty(\Omega)} \\
&+C_{0,T} \|\nabla\omega_{n+1}(a,\cdot)\|_{L^\infty(\Omega)}
 e^{(b-a)C_{0,T} \ln (2 + \| \nabla \omega_{n} \|_{L^\infty ([a,b]\times\Omega)})}.
\end{align*}
Hence
\[
\|\omega_{n+1}\|_{C^1([a,b]\times\Omega)}\leq C_{0,T}+C_{0,T}\|\nabla\omega_{n+1}(a,\cdot)\|_{L^\infty(\Omega)} (2+\|\omega_n\|_{C^1([a,b]\times\Omega)})^{C_{0,T}(b-a)}, 
\]
where we recall that $C_{0,T}>0$ may only depend on $\|\omega_0\|_{L^\infty(\Omega)}$, $T$, but is independent of $\omega_n$, $\omega_{n+1}$ and $[a,b]$. Setting $(b-a)$ sufficiently small, for instance,
\begin{equation}\label{small time 2}
C_{0,T}(b-a)\leq \frac 12,
\end{equation}
yields to
\[
\|\omega_{n+1}\|_{C^1([a,b]\times\Omega)}\leq 1 +C_{0,T}+ C_{0,T}^2 \|\nabla\omega_{n+1}(a,\cdot)\|_{L^\infty(\Omega)}^2 +\frac 12\|\omega_n\|_{C^1([a,b]\times\Omega)},
\]
whence, for each $k=0,\ldots,n$,
 \begin{align*}
 \|\omega_{n+1}\|_{C^1([a,b]\times\Omega)} 
 \leq & \Big(1+C_{0,T}+C_{0,T}^2 \sup_{p\geq 0}\|\nabla\omega_{p}(a,\cdot)\|_{L^\infty(\Omega)}^2 \Big)\Big(\sum_{j=0}^k2^{-j}\Big) \\
 &+\frac 1{2^{k+1}}\|\omega_{n-k}\|_{C^1([a,b]\times\Omega)} \\
 \leq & 2+2C_{0,T}+ 2C_{0,T}^2\sup_{p\geq 0}\|\nabla\omega_{p}(a,\cdot)\|_{L^\infty(\Omega)}^2 +\frac 1{2^{n+1}}\|\omega_{0}\|_{C^1(\Omega)}.
 \end{align*}
 
Since the initial data $\omega_0$ belongs to $C^1(\Omega)$, the constant $C_{0,T}$ only depends on fixed parameters and the bound \eqref{small time 2} on the maximal length of $[a,b]$ only involves $C_{0,T}$, we deduce that we may propagate the preceding $C^1$-bound on $[a,b]$ to the whole interval $[0,T]$. This yields a uniform bound
\begin{equation}\label{C1 bound}
\sup_{n\geq 0} \|\omega_{n}\|_{C^1([0,T]\times\Omega)}<\infty.
\end{equation}

\subsection{Convergence properties}

Next, we show that $(\omega_{n} )_{n\in \N}$ is actually a Cauchy sequence in $C^0$, which allows us to pass to the limit in the iteration scheme and obtain a solution of \eqref{Inviscid-vort} in the sense of distributions.

To this end, note that
\begin{align*}
\partial_{t} (\omega_{n+1}-\omega_n) + u_n\cdot\nabla(\omega_{n+1}-\omega_n) & = (u_{n-1}-u_n)\cdot \nabla \omega_{n}, \\
\partial_{t} (\omega_{n+1}-\omega_n) + u_{n-1}\cdot\nabla(\omega_{n+1}-\omega_n) & = (u_{n-1}-u_n)\cdot \nabla \omega_{n+1},
\end{align*}
whence, for any $[a,b]\subset [0,T]$,
\begin{align*}
(\omega_{n+1}-\omega_n)(b,X_n(b,x))= & (\omega_{n+1}-\omega_n)(a,X_n(a,x))\\
&+ \int_a^b (u_{n-1}-u_n)\cdot \nabla \omega_{n}(s,X_n(s,x)) ds,\\
(\omega_{n+1}-\omega_n)(b,X_{n-1}(b,x))= &(\omega_{n+1}-\omega_n)(a,X_{n-1}(a,x))\\
& +\int_a^b(u_{n-1}-u_n)\cdot \nabla \omega_{n+1}(s,X_{n-1}(s,x))ds.
\end{align*}
By linearity of the elliptic problem, Theorem~\ref{ellip-est} states that
\[
\| u_{n-1}-u_{n} \|_{L^\infty(\Omega)}\leq C \| \omega_{n-1}-\omega_{n}\|_{L^\infty(\Omega)}.
\]
This implies, utilizing \eqref{C1 bound}, for each $k=0,\ldots, n-1$, that
\begin{equation}\label{cauchy estimate}
\begin{aligned}
\|\omega_{n+1}-\omega_n&\|_{L^\infty([a,b]\times\Omega)} 
\leq 
\|(\omega_{n+1}-\omega_n)(a,\cdot)\|_{L^\infty(\Omega)}\\
&\hspace{2.5cm}+
C_1(b-a)
\|\omega_{n}-\omega_{n-1}\|_{L^\infty([a,b]\times\Omega)}
\\
\leq &
\sum_{j=0}^k
(C_1(b-a))^j
\|(\omega_{n+1-j}-\omega_{n-j})(a,\cdot)\|_{L^\infty(\Omega)}
\\
& \qquad+
(C_1(b-a))^{k+1}
\|\omega_{n-k}-\omega_{n-1-k}\|_{L^\infty([a,b]\times\Omega)}
\\
\leq &
\sum_{j=0}^{n-1}
(C_1(b-a))^j
\|(\omega_{n+1-j}-\omega_{n-j})(a,\cdot)\|_{L^\infty(\Omega)}
\\
& \qquad+
(C_1(b-a))^{n}
\|\omega_{1}-\omega_{0}\|_{L^\infty([a,b]\times\Omega)},
\end{aligned}
\end{equation}
for some independent constant $C_1>0$. As before, we set $(b-a)$ sufficiently small, say,
\begin{equation*}
C_1(b-a)\leq \frac 12.
\end{equation*}
In particular, since the $\omega_n$'s all have the same initial data $\omega_0$, we find that
\begin{align*}
\|\omega_{n+1}-\omega_n\|_{L^\infty([0,b-a]\times\Omega)}
& \leq
(C_1(b-a))^{n}
\|\omega_{1}-\omega_{0}\|_{L^\infty([0,b-a]\times\Omega)}
\\
& \leq
\frac 1{2^{n-1}}
\|\omega_0\|_{L^\infty(\Omega)}.
\end{align*}
Therefore, utilizing the elementary identity
\begin{equation*}
\sum_{j=0}^n \begin{pmatrix}j+k\\ k\end{pmatrix}= \begin{pmatrix}n+k+1\\ k+1\end{pmatrix},
\end{equation*}
for each $n,k\in\mathbb{N}$, we obtain
\begin{align*}
\|\omega_{n+1}-\omega_n\|_{L^\infty([k(b-a),(k+1)(b-a)]\times\Omega)}
\leq &
2
\begin{pmatrix}n+k\\ k\end{pmatrix}
(C_1(b-a))^n
\|\omega_0\|_{L^\infty(\Omega)}
\\
\leq &
\frac 1{2^{n-1}}
\begin{pmatrix}n+k\\ k\end{pmatrix}
\|\omega_0\|_{L^\infty(\Omega)},
\end{align*}
whence
\begin{align*}
\|\omega_{n+1}-\omega_n\|_{L^\infty([0,T]\times\Omega)}
& \leq
\frac {C}{2^{\frac n2}},
&&\text{for all }n\geq 0,
\\
\|\omega_{m}-\omega_n\|_{L^\infty([0,T]\times\Omega)}
& \leq
\frac {C'}{2^{\frac n2}},
&&\text{for all }m>n\geq 0,
\end{align*}
for some independent constants $C,C'>0$.

It follows that $(\omega_n)_{n\geq 0}$ is a Cauchy sequence in $C^0([0,T]\times\Omega)$ and, therefore, there exists $\omega\in C([0,T]\times\Omega)$ such that
\begin{equation}\label{convergence xi mu}
\begin{aligned}
\omega_n & \longrightarrow \omega
\quad\text{in }L^\infty([0,T]\times\Omega),
\\
u_n & \longrightarrow u
\quad\text{in }L^\infty([0,T]\times \Omega),
\end{aligned}
\end{equation}
where $u$ is defined by $\frac{1}{b}\nabla^\perp \psi[\omega]$ and we have used Theorem~\ref{ellip-est} to derive the convergence of $u_n$ from that of $\omega_n$. It is then readily seen that $\omega$ solves \eqref{Inviscid-vort} in the sense of distributions.

\subsection{Regularity of solution and conclusion of proof}
In order to complete the proof of well-posedness in $C^1([0,T])$, there only remains to show that $\omega$ is actually of class $C^1$. Indeed, the uniqueness of solutions will be easily ensured from an estimate similar to \eqref{cauchy estimate}.

For the moment, the uniform boundedness of $(\omega_n)_{n\geq 0}$ in $C^1([0,T]\times\Omega)$ only allows us to deduce that $\omega$ is Lipschitz continuous (in $t$ and $x$). We also know from \eqref{convergence xi mu} that $\supp \omega(t,\cdot)\subset \Omega_{T}$ for any $t\in [0,T]$. Theorem~\ref{ellip-est} whence implies that $\nabla u$ exists and is continuous in $[0,T]\times\Omega_{T}$. It follows that the associated characteristic curve $X(t,x)$ solving
\[
\frac{dX }{ds}=u(s,X),
\]
for some given initial data $X(0,x)=x\in \supp \omega_{0}$, belongs to $C^1([0,T]\times \supp \omega_{0}; \Omega)$, see the proof of Lemma~\ref{lem-iteration} to state that $X(t,x)\in \Omega_{T}\Subset\Omega$. Since the mapping $x\mapsto X(t,x)$ is a $C^1$-diffeomorphism from $\supp\omega_0$ onto its own image, we consider its inverse $X^{-1}(t,x)$. By the uniqueness of the linear transport equation, we have that $\omega$ can be expressed though the characteristics:
\begin{equation*}
\left\{
\begin{aligned}
\omega(t,x) & = \omega_0(X^{-1}(t,x)), && \text{if } x\in X(t,\supp\omega_0),
\\
\omega(t,x) & = 0, && \text{otherwise},
\end{aligned}
\right.
\end{equation*}
which gives that $\omega \in C^1_c([0,T]\times\Omega)$. This ends the proof of Theorem~\ref{Th-WP}.

Following the last argument in the proof of Theorem 2.2 in Section 7.1 of \cite{ADL}, it is also possible to show that $X_n$ converges uniformly in $(t,x)\in [0,T]\times\supp\omega_0$ towards $X$. However, such a property is not necessary for Theorem~\ref{Th-WP}.

\section{The vanishing viscosity limit: proof of Theorem~\ref{Th-Vanishing}}\label{section-van-vis}

In this section, we fix $(\Omega,b)$ verifying \eqref{b-cond2}, and we denote by $C$ a numerical constant whose value may change from line to line, but never depending on $\mu$ and $u_{0}^\mu$.
\begin{proof}
The proof of Theorem~\ref{Th-Vanishing} is based on a classical energy method used, for instance, to prove some vanishing viscosity theorems for incompressible Navier-Stokes equations (\cite{kato, te-wa, padd, If-Pl, Gi-Ke}). Indeed, we denote by 
$$w^{\mu}=\umu-u.$$
Then the variable $w^\mu$ solves the following equation in the variational formulation (see Section~\ref{def-viscous})
\begin{multline}\label{Eq-w}
\del_{t}(bw^{\mu})+(b\umu\cdot \nabla)w^{\mu}+(bw^\mu\cdot\nabla)u-2\mu\divv(b \D(u^\mu))-2\mu \nabla(b\divv \umu) \\+b\nabla(p^{\mu}-q)=0,
\end{multline}
with the following free divergence equation $\divv(bw^{\mu})=0.$ Taking $w^{\mu}$ as a test function which is possible since $w^{\mu}$ has the right regularity\footnote{The Euler solution belongs to $V_{b}$, which is the relevant test functions space for the Navier boundary condition.}, we obtain the following equation
\begin{multline}\label{Es1-w}
 \dfrac{1}{2}\Dt \intO |w^{\mu}|^2\,b\,dx+2\mu \intO \D(\umu): \D(w^{\mu})\,b\,dx\\
 +2\mu \intO \divv \umu \, \divv w^{\mu}\,b\,dx
 +\mu \intBO \eta_\mu\umu \cdot w^{\mu}\,b\,ds =-\intO (w^{\mu}\cdot\nabla)u\cdot w^{\mu}\,b\,dx.
\end{multline}
For every $x, y\in \mathbb{R}^{2}$, if we denote by $z=x-y$, the following identity holds
\begin{equation*}
2(x\cdot z)=2\Big|z+\frac{y}{2}\Big|^{2}-\frac{1}{2}|y|^{2},
\end{equation*}
and the same goes for the scalar matrix product. So from \eqref{Es1-w}, we get
\begin{equation}\label{diff-w}
\begin{split}
& \dfrac{1}{2}\Dt \intO |w^{\mu}|^2\,b\,dx+2\mu \intO |\D\big(w^{\mu}+\frac{u}{2}\big)|^2\,b\,dx\\
&+2\mu \intO |\divv(w^{\mu}+\frac{u}{2}\big)|^2\,b\,dx +\mu \intBO \eta_\mu\big|w^{\mu}+\frac{u}{2} \big|^2\,b\,ds \\
 &\hspace{4cm}=\dfrac{\mu}{2}\intO |\D(u)|^2\,b\,dx +\dfrac{\mu}{2}\intO |\divv(u)|^2\,b\,dx
 \\
 &\hspace{4.4cm}-\intO (w^{\mu}\cdot\nabla)u\cdot w^{\mu}\,b\,dx+\mu\intBO \dfrac{\eta_\mu}{4} |u|^2\,b\,ds.
 \end{split}
\end{equation}
Using estimates furnished in Theorem~\ref{Th-WP} on the solution $u$, H\"older inequality, and the fact that $b$ is bounded from above, we get (remember $0\leq \eta_\mu\leq \eta \mu^{-\beta}$)
\begin{equation}\label{estimates}
\begin{split}
&\dfrac{\mu}{2}\intO |\D(u)|^2\,b\,dx
 +\dfrac{\mu}{2}\intO |\divv(u)|^2\,b\,dx
 \leq \mu C \|u\|_{H^1(\Omega)}^2,\\
&\mu\intBO \dfrac{\eta_\mu}{4} |u|^2\,b\,ds\leq \eta \mu^{1-\beta} C \|u\|_{H^1(\Omega)}^2,\\
&\Big|\int_{\Omega} (w^{\mu}\cdot\nabla)u\cdot w^{\mu}\,b\,dx\Big| \leq \|\nabla u\|_{L^\infty(\Omega)} \|w^{\mu}\|_{L^2_b(\Omega)}^2.
\end{split}
\end{equation}
Besides, from \eqref{diff-w} and taking in mind \eqref{estimates}, we deduce the following
\begin{equation*}
\dfrac{1}{2}\frac{d}{dt} \|w^{\mu}\|_{L^2_b(\Omega)}^2
\leq \|\nabla u\|_{L^\infty(\Omega)} \|w^{\mu}\|_{L^2_b(\Omega)}^2+\mu C \|u\|_{H^1(\Omega)}^2 +\eta \mu^{1-\beta} C \|u\|_{H^1(\Omega)}^2.
\end{equation*}
Let us note that for the vanishing topography on all the boundary, for instance for $\alpha>0$ when $(\Omega,b)$ verifies \eqref{b-cond2}, we can remove the last right-hand side term. However, as said in Section~\ref{def-viscous}, we keep this term for the sake of possible generality in the future where the weight $b$ could be taken not identically zero at one or more components of the boundary, for instance on an island.

After applying the Gr\"onwall's lemma, we obtain
\begin{multline*}
 \|w^{\mu}(t)\|_{L^2_b(\Omega)}^2 \leq\Big[ \|w^{\mu}(0)\|_{L^2_b(\Omega)}^2
 +(\mu +\eta\mu^{1-\beta}) C \int_0^t \|u\|_{H^1(\Omega)}^2\Big]\\
 \times \exp \Big(2\int_0^t \|\nabla u\|_{L^\infty(\Omega)} \,d\tau\Big).
\end{multline*}
As $C$ is independent of $\mu$, this ends the proof of Theorem~\ref{Th-Vanishing}.
\end{proof}

\begin{rmk}
 Notice that, when estimating the third right-hand side integral in \eqref{diff-w}, we used the fact that $\nabla u$ belongs to $L^\infty(\Omega).$ This estimate is furnished by Theorem~\ref{ellip-est} and was not shown in the previous works on the inviscid lake equations \cite{br-me-lake, la-pau}. Nevertheless, one could try to estimate this integral using a different way, precisely following Yudovich's method for the proof of uniqueness of weak solutions to the incompressible Euler equations. Indeed, following this way of proof, we just need to know that $\nabla u$ belongs to $L^p(\Omega),$ for all $\, p<\infty,$ which is due to \cite{br-me-lake}. However, another difficulty appears which is interpreted by the requirement of having $|\sqrt{b}w^{\mu}|^{\frac{2}{p}}\in L^{\infty}(\Omega)$. This information, unfortunately, is not known.

\end{rmk}

\medskip

\noindent
{\bf Acknowledgements.} The second author is partially supported by the Agence Nationale de la Recherche, project SINGFLOWS, grant ANR-18-CE40-0027-01. This work was realized during the secondment of the first author in Beijing International Center for Mathematical Research at Peking University.



\end{document}